\newtheorem{theorem}{Theorem}[section]
\newtheorem{lemma}[theorem]{Lemma}
\newtheorem{proposition}[theorem]{Proposition}
\newtheorem{corollary}[theorem]{Corollary}
\newtheorem{remark}[theorem]{Remark}
\DeclareMathOperator{\trace}{tr}
\DeclareMathOperator{\dist}{dist}
\DeclareMathOperator{\IC}{IC}
\DeclareMathOperator{\Sym}{Sym}
\title[A differential Harnack inequality]{A differential Harnack inequality for noncompact evolving hypersurfaces}
\author{Stephen Lynch}
\begin{document}

\begin{abstract}
We prove a differential Harnack inequality for noncompact convex hypersurfaces flowing with normal speed equal to a symmetric function of their principal curvatures. This extends a result of Andrews for compact hypersurfaces. We assume that the speed of motion is one-homogeneous, uniformly elliptic, and suitably `uniformly' inverse-concave as a function of the principal curvatures. In addition, we assume the hypersurfaces satisfy pointwise scaling-invariant gradient estimates for the second fundamental form. For many natural flows all of these hypotheses are met by any ancient solution which arises as a blow-up of a singularity.
\end{abstract}

\maketitle

\section{Introduction} Differential Harnack inequalities for solutions to parabolic PDE were introduced by Li and Yau in their seminal paper \cite{Li--Yau}. Let $u: M\times(0,T]\to\mathbb{R}$ denote a bounded positive solution to the heat equation, where $(M,g)$ is a compact Riemannian manifold with nonnegative Ricci curvature. The Li--Yau inequality asserts that 
    \[\partial_t u - \frac{|\nabla u|^2}{u} + \frac{n}{2t}u \geq 0.\]
This inequality is saturated by the Euclidean heat kernel. Upon integration over an appropriate path in spacetime, it recovers the classical Harnack inequality for solutions to the heat equation: for $0 < t_0 < t_1 \leq T$ we have 
    \[u(x_1, t_1) \geq \bigg(\frac{t_0}{t_1}\bigg)^{n/2}\exp\bigg( - \frac{d_{g}(x_0, x_1)^2}{4(t_1 - t_0)}\bigg)\,u(x_0,t_0).\]

Analogues of the Li--Yau differential Harnack inequality have since been found for many other equations, including geometric flows. Hamilton found remarkable Harnack inequalities for the curvature of solutions to the Ricci flow with nonnegative curvature operator \cite{Hamilton_Harnack_Ricci} (see also \cite{Brendle_Harnack}), and for weakly convex solutions of the mean curvature flow \cite{Hamilton_Harnack_MCF}. These inequalities play a fundamental role in understanding singularity formation, and have therefore had profound implications in geometry. 

Chow proved the analogue of Hamilton's Harnack inequality for compact, strictly convex hypersurfaces flowing by powers of their Gauss curvature \cite{Chow}. Andrews generalised this result to a large class of fully nonlinear flows \cite{Andrews_Harnack}. Solutions to these flows move with speed equal to a general symmetric homogeneous function of their principal curvatures. Andrews' estimate applies, in particular, when the speed function is homogeneous of degree one and inverse-concave. 

Solutions to curvature flows which arise as dilations of singularities are often noncompact, and it is desirable to have a Harnack inequality which applies to these. In the present paper we extend Andrews' differential Harnack inequality to noncompact convex hypersurfaces, provided the speed of motion is homogeneous of degree one, uniformly elliptic, and suitably  `uniformly' inverse-concave. In addition, we need to assume the hypersurfaces satisfy pointwise scaling-invariant gradient estimates. For certain flows all of these hypotheses are known to be satisfied by any ancient solution which arises as a blow-up of a singularity. For example, our Harnack inequality applies to blow-ups of compact embedded solutions to the flows introduced in \cite{Brendle_Huisken} and \cite{Lynch}.  

Let us further discuss the flow introduced in \cite{Brendle_Huisken}. There Brendle and Huisken studied domains in a Riemannian $(n+1)$-manifold whose boundaries move inward with speed equal to 
    \[\gamma(\lambda) = \bigg(\sum_{i < j} \frac{1}{\lambda_i + \lambda_j}\bigg)^{-1},\]
where the $\lambda_i$ are the principal curvatures. By implementing a surgery procedure for this flow, they were able to classify compact Riemannian manifolds with strictly two-convex boundary and nonnegative curvature in the sense that $R_{ikik} + R_{jkjk} \geq 0$. Namely, these spaces are all diffeomorphic to a standard ball or 1-handlebody. In forthcoming work with Cogo and Vi\v{c}\'{a}nek Mart\'{i}nez we completely classify the ancient solutions which can arise as blow-ups at a singularity of this flow---these are the shrinking round sphere $S^{n}$, the shrinking round cylinder $\mathbb{R}\times S^{n-1}$, and the unique rotationally symmetric translating soliton. The corresponding result for mean curvature flow is due to Brendle and Choi \cite{Brendle_Choi_a, Brendle_Choi_b}. The differential Harnack inequality proven in this paper is an important ingredient in our proof.

\subsection{Main results} Let $\Gamma \subset \mathbb{R}^n$ denote an open, symmetric (under permutations), convex cone. We assume that $\Gamma$ contains the positive cone $\mathbb{R}^n_+$. Fix a function $\gamma:\Gamma \to \mathbb{R}$ which is smooth, positive, symmetric, strictly increasing in each argument, and homogeneous of degree one.

We consider evolving immersions $F : M \times I \to \mathbb{R}^{n+1}$, $I \subset \mathbb{R}$, which satisfy the evolution equation
    \begin{equation}\label{flow}
    \partial_t F(x,t) = -G(x,t)\nu(x,t),
    \end{equation}
for every $(x,t) \in M \times I$, where $G(x,t) = \gamma(\lambda(x,t))$ and $\nu(x,t)$ is the outward unit normal. We write $\lambda(x,t)$ for the principal curvatures of $F$, i.e. the eigenvalues of the Weingarten map         \[A(X) = D_X \nu,\] 
at $(x,t)$. These will be labeled so that $\lambda_1 \leq \dots \leq \lambda_n$. A solution to \eqref{flow} is called uniformly $k$-convex if
    \[\inf_{M\times I} \frac{\lambda_1 + \dots + \lambda_k}{H} > 0,\]
where $H$ is the mean curvature. The metric induced on $M$ by the immersion $F(\cdot,t)$ will be denoted $g = g(t)$. 

The function $\gamma$ gives rise to a smooth, $O(n)$-invariant function on the space of symmetric matrices with eigenvalues in $\Gamma$. We also denote this function $\gamma$. We say that $\gamma$ is (strictly) inverse-concave if $\lambda \mapsto -\gamma(\lambda^{-1})$ is (strictly) concave on $\mathbb{R}^n_+$ or, equivalently, if $A \mapsto -\gamma(A^{-1})$ is (strictly) concave on the space of positive-definite symmetric matrices. Inverse-concavity is equivalent to the pointwise inequality
    \begin{equation}\label{IC}
    \bigg(\frac{\partial^2 \gamma}{\partial A_{ij}\partial A_{kl}}(A) + 2\frac{\partial \gamma}{\partial A_{ik}}(A)A^{-1}_{jl}\bigg)S_{ij}S_{kl} \geq 0,
    \end{equation}
for positive-definite symmetric $A$ and symmetric $S$.

We now state the Harnack inequality. As mentioned above, this is new when $M$ is noncompact. For compact $M$, the result was proven in \cite{Andrews_Harnack} under more general hypotheses---when $M$ is compact \eqref{unif ellipticity}, \eqref{pert inverse-concave} and \eqref{gradient estimate} are unnecessary. 

\begin{theorem}\label{Harnack}
Suppose $\gamma$ is inverse-concave. Let $F: M\times[0, T] \to \mathbb{R}^{n+1}$ be a complete solution to \eqref{flow} which satisfies $A \geq 0$. We assume there are positive constants $C$ and $\varepsilon$ such that at each point in $M\times[0,T]$, with respect to an orthonormal frame, we have 
    \begin{equation}\label{unif ellipticity}
    C^{-1}g_{ij} \leq \frac{\partial\gamma}{\partial A_{ij}}(A) \leq Cg_{ij}.
    \end{equation}
and 
    \begin{equation}\label{pert inverse-concave}
        \bigg(\frac{\partial^2 \gamma}{\partial A_{ij}\partial A_{kl}}(A) + 2\frac{\partial \gamma}{\partial A_{ik}}(A)(A+ \varepsilon G g)^{-1}_{jl}\bigg)S_{ij}S_{kl}\geq 0
    \end{equation}
for every symmetric $S$. In addition, we assume bounded curvature
    \[\sup_{M\times[0,T]} G < \infty\]
and the gradient estimates
    \begin{equation}\label{gradient estimate}
    \sup_{M\times[0,T]} G^{-2}|\nabla A| + G^{-3}|\nabla^2 A| < \infty.
    \end{equation}
Then, for every $(x,t) \in M\times(0,T]$ and $V \in T_x M$, we have 
    \begin{equation}\label{Harnack V}
    \partial_t G + 2\langle \nabla G, V\rangle + A(V,V) + \frac{G}{2t} \geq 0.
    \end{equation}
If $\gamma$ is convex then \eqref{pert inverse-concave} is unnecessary and instead of \eqref{gradient estimate} it suffices to assume 
    \[\sup_{M\times[0,T]} |\nabla G| + |\partial_t G| < \infty.\]
\end{theorem}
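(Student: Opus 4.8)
The plan is to adapt Andrews' argument for the compact case by combining it with two new ingredients: a perturbation of the Harnack quantity that tolerates the weak positivity of $A$, and a localisation that renders the maximum principle legitimate on a noncompact manifold. Write $\dot\gamma^{ij} = \tfrac{\partial\gamma}{\partial A_{ij}}$ and $\ddot\gamma^{ij,kl} = \tfrac{\partial^2\gamma}{\partial A_{ij}\partial A_{kl}}$; one has $\partial_t G = \dot\gamma^{kl}\nabla_k\nabla_l G + G\,\dot\gamma^{kl}(A^2)_{kl} \ge \dot\gamma^{kl}\nabla_k\nabla_l G$, so by the strong maximum principle we may assume $G > 0$ on $M\times(0,T]$ (otherwise $A\equiv 0$ and \eqref{Harnack V} is trivial on that component). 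For $\varepsilon > 0$ put $A_\varepsilon = A + \varepsilon G g$, which is positive-definite, and define
\[
P_\varepsilon \;=\; \partial_t G \;-\; (A_\varepsilon)^{-1}(\nabla G, \nabla G) \;+\; \frac{G}{2t}.
\]
Completing the square shows that $P_\varepsilon \ge 0$ on $M\times(0,T]$ is equivalent to $\partial_t G + 2\langle\nabla G, V\rangle + A_\varepsilon(V,V) + \tfrac{G}{2t} \ge 0$ for all $V \in T_xM$; since $\sup_{M\times[0,T]} G < \infty$ we have $A_\varepsilon \to A$ uniformly as $\varepsilon\to 0$, so proving $P_\varepsilon \ge 0$ for every fixed $\varepsilon > 0$ yields \eqref{Harnack V}.

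The core of the argument is the evolution inequality for $P_\varepsilon$. Using the evolution equation for the second fundamental form under \eqref{flow}, which has the schematic form $\partial_t A_{ij} = \dot\gamma^{kl}\nabla_k\nabla_l A_{ij} + \ddot\gamma^{kl,pq}\nabla_i A_{kl}\nabla_j A_{pq} + (\text{terms of lower order in derivatives of }A)$, together with $\partial_t(G/2t) = (\partial_t G)/2t - G/2t^2$ and the formula for $\partial_t (A_\varepsilon)^{-1}$, one computes by essentially the same calculation as in the compact case (now carrying the perturbation $A_\varepsilon$) that
\[
\partial_t P_\varepsilon \;\ge\; \dot\gamma^{kl}\nabla_k\nabla_l P_\varepsilon \;+\; \langle b, \nabla P_\varepsilon\rangle \;+\; \frac{1}{t}\,R_\varepsilon,
\]
where $b$ is a vector field bounded in terms of the hypotheses and $R_\varepsilon \ge 0$ at every point where $P_\varepsilon = 0$. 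The only contribution to this computation whose sign is not immediate is the one coming from the $\ddot\gamma$ term in $\partial_t A_\varepsilon$ as it is propagated through $(A_\varepsilon)^{-1}$; it takes the form $\ddot\gamma^{ij,kl}S_{ij}S_{kl}$ for a symmetric $2$-tensor $S$ assembled from $\nabla A$, and it is absorbed by invoking \eqref{pert inverse-concave} in the form $\ddot\gamma^{ij,kl}S_{ij}S_{kl} \ge -2\dot\gamma^{ik}(A_\varepsilon)^{-1}_{jl}S_{ij}S_{kl}$ — which is precisely why the perturbation in $P_\varepsilon$ is chosen to be $A + \varepsilon G g$, matching the perturbation in the hypothesis. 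When $\gamma$ is convex one has $\ddot\gamma \ge 0$, this step is vacuous, no perturbation is needed, and the argument is run directly on $P_0$.

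It remains to run the maximum principle, which on a noncompact $M$ requires localisation. Because $\sup_{M\times[0,T]}G < \infty$, the normal speed of \eqref{flow} is bounded; by \eqref{unif ellipticity}, one-homogeneity and $A \ge 0$ we have $G = \dot\gamma^{ij}A_{ij} \ge C^{-1}\trace A \ge C^{-1}|A|$, so $|A|$ is bounded, and then the scaling-invariant estimates \eqref{gradient estimate} give $|\nabla G| \le C|\nabla A| \le CG^2$ and, after a further differentiation, $|\partial_t G| \le CG^3$; consequently $P_\varepsilon$ is bounded, and in fact $P_\varepsilon \ge G\bigl(\tfrac{1}{2t} - \tfrac{C}{\varepsilon}G^2\bigr) \ge 0$ for $t$ below a threshold $\tau_0(\varepsilon) > 0$, which disposes of the initial time even where $G$ degenerates. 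Fixing a cutoff function $\eta$ supported in a compact subset of $M$, equal to $1$ on a large compact set, with $|\nabla\eta|$, $|\dot\gamma^{kl}\nabla_k\nabla_l\eta|$ and $|\partial_t\eta|$ controlled uniformly via the bounds on $|A|$ and on the speed (using \eqref{unif ellipticity} for the second quantity), one applies the maximum principle to $\eta P_\varepsilon$ on $M\times[\tau_0,T]$: at a first interior spacetime point where $\eta P_\varepsilon$ reaches $0$ one has $P_\varepsilon = 0$, so all terms involving $\nabla P_\varepsilon$ and the cutoff error $(\partial_t\eta - \dot\gamma^{kl}\nabla_k\nabla_l\eta)P_\varepsilon$ vanish there, and $R_\varepsilon \ge 0$ contradicts the evolution inequality (a small additive or $e^{\delta/t}$ barrier, as in Hamilton's and Andrews' treatments, makes this strict). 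Enlarging the support of $\eta$, then sending $\delta, \varepsilon \to 0$, gives \eqref{Harnack V}. I expect the evolution computation for $P_\varepsilon$ — carrying the perturbation through the reaction terms and confirming that \eqref{pert inverse-concave} is exactly what closes the estimate — to be the principal technical point; the localisation and the initial-time argument, although indispensable in the noncompact setting, are routine once the above consequences of \eqref{unif ellipticity}, \eqref{gradient estimate} and $\sup G < \infty$ are in hand, and in the convex case the latter play no role beyond the assumed bounds on $|\nabla G|$ and $|\partial_t G|$.
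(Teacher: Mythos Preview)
Your approach differs from the paper's in a way that matters, and there are two genuine gaps.

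First, the paper does \emph{not} work with a scalar Harnack quantity. It explicitly abandons that route (see the discussion preceding Proposition~\ref{Harnack evolution}) and instead follows Hamilton: one studies the form $Q(V) = \partial_t G + 2\langle\nabla G,V\rangle + A(V,V) + G/(2t)$ and augments it by a barrier $\varphi + \psi|V|^2$ with $\varphi$ growing at spatial infinity. At a first zero of $\hat Q$ the minimising vector $V$ can be extended \emph{freely}, and the crucial move is to choose the extension so that $Y_{ij} := \nabla_i V_j - GA_{ij} - \tfrac{1}{2t}g_{ij} = -W_{ik}(A+\varepsilon Gg)^{-1}_{kj}$. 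This is precisely what produces $(\ddot\gamma^{ij,pq} + 2\dot\gamma^{ip}(A_\varepsilon)^{-1}_{jq})W_{ij}W_{pq}$ and lets \eqref{pert inverse-concave} close the estimate, while the errors this choice creates are controlled by \eqref{gradient estimate}. Your scalar quantity $P_\varepsilon$ fixes $V = -(A_\varepsilon)^{-1}\nabla G$ and therefore fixes $\nabla V$; there is no freedom left. The assertion that ``essentially the same calculation'' yields $\partial_t P_\varepsilon \ge \Delta_\gamma P_\varepsilon + \langle b,\nabla P_\varepsilon\rangle + t^{-1}R_\varepsilon$ with $R_\varepsilon \ge 0$ at $\{P_\varepsilon = 0\}$ is the entire content of the theorem and is not established: the perturbation $A\mapsto A_\varepsilon$ contributes error terms through $\nabla A_\varepsilon = \nabla A + \varepsilon\nabla G\,g$ in the evolution of $(A_\varepsilon)^{-1}$, and these are bounded but neither obviously nonnegative on $\{P_\varepsilon=0\}$ nor obviously of the form $\langle b,\nabla P_\varepsilon\rangle$.

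Second, the localisation is wrong. If $\eta$ is a compactly supported cutoff then $\eta P_\varepsilon \equiv 0$ on $\partial(\operatorname{supp}\eta)$ at every time, so $\inf(\eta P_\varepsilon)\le 0$ from the outset and there is no ``first interior zero'' from which to derive a contradiction. To prove a \emph{lower} bound one must add a barrier that is large at infinity, not one that vanishes there. The paper takes $\varphi = b\,t^{-3/2}e^{Lt}f$ with $f\to\infty$ at spatial infinity and $(\partial_t-\Delta_\gamma)f\ge 2Lf$; this forces the infimum of $\hat Q$ to be attained at a finite point, and the constants $K,L$ are then chosen large enough (depending on $n,\gamma,C,\Xi,\Lambda,\varepsilon$) to produce a contradiction.
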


Let us comment on the hypotheses \eqref{unif ellipticity}, \eqref{pert inverse-concave} and \eqref{gradient estimate}. The evolution of the Harnack quantity contains terms involving the Hessian of $\gamma$, which need to be overcome in order to establish \eqref{Harnack V} using the maximum principle. If the solution is noncompact, we also need to be able to localise by introducing an auxhiliary function which grows at infinity. It is \eqref{unif ellipticity} and \eqref{pert inverse-concave}, and the gradient estimates \eqref{gradient estimate}, which let us achieve both of these things simultaneously. We refer to Remark~\ref{need for uniformity} for further discussion. 

As stated in Theorem~\ref{Harnack}, if $\gamma$ is convex then the assumption \eqref{pert inverse-concave} is unnecessary and \eqref{gradient estimate} can be weakened substantially. In fact, in this case the terms in the evolution of the Harnack quantity depending on the Hessian of $\gamma$ have a favourable sign. This means Hamilton's proof for the mean curvature flow applies almost verbatim. 

We now introduce natural conditions under which \eqref{unif ellipticity}, \eqref{pert inverse-concave} and \eqref{gradient estimate} are met. This leads to Theorem~\ref{Harnack k-convex} below. 

{\emph{Pointwise gradient estimates for ancient solutions.}} A solution to \eqref{flow} is called ancient if it exists for all $t  \in (-\infty, T]$. Ancient solutions are of great interest, since they arise as models for singularity formation via rescaling. 

When $\gamma$ is convex or concave, by work of Brendle and Huisken \cite{Brendle_Huisken}, a pointwise gradient estimate of the form
    \[G^{-2}|\nabla A| + G^{-3}|\nabla^2 A| \leq \Lambda(n,\gamma,C,\alpha)\]
holds for convex ancient solutions of \eqref{flow} which satisfy \eqref{unif ellipticity} and are also $\alpha$-noncollapsing.

The (interior) $\alpha$-noncollapsing property for solutions to \eqref{flow} is a form of quantitative embeddedness. Let $F(M,t) = M_t$ and suppose $M_t = \partial \Omega_t$, where $\Omega_t$ is an open subset of $\mathbb{R}^{n+1}$. We say the solution $M_t$ is $\alpha$-noncollapsing if there is a time-independent constant $\alpha > 0$ such that $\Omega_t$ admits an inscribed ball of radius $\alpha G(x,t)^{-1}$ for each $x \in M_t$. In \cite{ALM_Noncollapsing} it was shown that for compact solutions and $\gamma$ concave, the noncollapsing property is preserved forward in time. When we do not need to refer to $\alpha$ we simply say that $M_t$ is noncollapsing.

{\emph{Uniform inverse-concavity.}} For each integer $0 \leq m \leq n$, we define
    \[\Gamma_+^m = \{\sigma(\lambda) \in \mathbb{R}^{n} : \lambda = (0, \lambda_1, \dots, \lambda_m), \; \min_i \lambda_i >0, \; \sigma \in P_n\},\]
where $P_n$ is the group of permutations on $n$ elements. For $1 \leq m \leq n-1$, $\Gamma_+^m$ is the union of all of the $m$-dimensional facets of $\partial \Gamma_+^n$. Each connected component of $\Gamma_+^m$ can be identified with $\mathbb{R}_+^m$.

Since $\Gamma$ is open, convex and contains $\Gamma_+^n$, for each $1 \leq m \leq n$ we either have $\Gamma_+^m \subset \Gamma$ or else $\Gamma_+^m \cap \Gamma = \emptyset$. Let $m_*$ denote the least integer such that $\Gamma_+^m \subset \Gamma$.  For each $m_* \leq m \leq n$ we write $\gamma_m : \mathbb{R}_+^m \to \mathbb{R}$ for the function
    \[\gamma_m(\lambda_1, \dots, \lambda_m) := \gamma(0, \lambda_1, \dots, \lambda_m).\]
When $\gamma$ is strictly inverse-concave we define $m_{\IC} \geq m_*$ to be the least integer such that $\gamma_m$ is strictly inverse-concave for every $m_{\IC} \leq m \leq n$.

In Section~\ref{sec unif inverse-concavity} we demonstrate that if the eigenvalues of $A \geq 0$ satisfy
    \[\min_{0 \leq m < m_{\IC}} \dist(\tfrac{\lambda}{|\lambda|}, \Gamma_+^m) \geq \delta,\]
then \eqref{unif ellipticity} and \eqref{pert inverse-concave} hold for some positive $C = C(n,\gamma,\delta)$ and $\varepsilon = \varepsilon(n,\gamma,\delta)$. When $A \geq 0$, this is equivalent to assuming that $\lambda$ is uniformly $k$-positive with $k = n - m_{\IC} + 1$. Therefore, \eqref{unif ellipticity} and \eqref{pert inverse-concave} hold on a solution to \eqref{flow} which is uniformly $k$-convex with $k \leq n - m_{\IC} + 1$. 

As a result of all of this discussion, we have the following consequence of Theorem~\ref{Harnack}. 

\begin{theorem}\label{Harnack k-convex}
Suppose $\gamma$ is convex, or concave and strictly inverse-concave. Let $F: M\times(-\infty, 0] \to \mathbb{R}^{n+1}$ be an ancient solution to \eqref{flow}. We assume the hypersurfaces $M_t = F(M,t)$ each bound an open convex subset $\Omega_t$. In addition, for each $T < \infty$, we assume $M_t$ is noncollapsing and uniformly $k$-convex on the time interval $[-T,0]$, where $k \leq n - m_{\IC} + 1$. Finally, we assume that 
    \[\sup_{M\times[-T,0]} G < \infty\]
for each $T < \infty$. Then, for every $(x,t) \in M\times(-\infty,0]$ and $V \in T_x M$, we have 
    \begin{equation}\label{Harnack ancient}
    \partial_t G + 2\langle \nabla G, V\rangle + A(V,V) \geq 0.
    \end{equation}
\end{theorem}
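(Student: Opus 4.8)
The plan is to derive Theorem~\ref{Harnack k-convex} from Theorem~\ref{Harnack} by applying the latter on the time intervals $[-T,0]$ for larger and larger $T$ and then letting $T\to\infty$. The mechanism is that the extra term $\tfrac{G}{2t}$ in \eqref{Harnack V}, once the interval of length $T$ is shifted back to $[-T,0]$, becomes $\tfrac{G}{2(t+T)}$, which tends to zero as $T\to\infty$ with $(x,t)$ held fixed.

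Fix $T<\infty$ and define $F_T\colon M\times[0,T]\to\mathbb{R}^{n+1}$ by $F_T(x,s)=F(x,s-T)$; this is again a complete solution of \eqref{flow}. I would verify that $F_T$ satisfies the hypotheses of Theorem~\ref{Harnack} on $[0,T]$. Convexity of $\Omega_t$ together with the choice of outward normal gives $A\ge 0$, and the assumption $\sup_{M\times[-T,0]}G<\infty$ supplies the bounded-curvature hypothesis. Since $M_t$ is uniformly $k$-convex on $[-T,0]$ with $k\le n-m_{\IC}+1$ and $A\ge 0$, the discussion of Section~\ref{sec unif inverse-concavity} shows that \eqref{unif ellipticity} and \eqref{pert inverse-concave} hold on $M\times[-T,0]$ with constants depending on $n$, $\gamma$ and the uniform $k$-convexity constant of the interval; in the case $\gamma$ convex, \eqref{pert inverse-concave} in fact follows directly from $A\ge 0$ and one only needs \eqref{unif ellipticity}. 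Finally, $F$ is an ancient solution which is convex, noncollapsing on each finite interval, and---by what was just said---satisfies \eqref{unif ellipticity}; hence the pointwise gradient estimate of Brendle and Huisken applies and gives $G^{-2}|\nabla A|+G^{-3}|\nabla^2 A|\le\Lambda$ on $M\times[-T,0]$, which together with $\sup G<\infty$ yields \eqref{gradient estimate}. (If $\gamma$ is convex one only needs $\sup(|\nabla G|+|\partial_t G|)<\infty$, which follows in the same way.) A point requiring minor care is that the Brendle--Huisken estimate on $[-T,0]$ uses the hypotheses on a slightly larger interval, say $[-(T+1),0]$, but this too is provided by the assumptions.

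Granting these verifications, Theorem~\ref{Harnack} applied to $F_T$ and translated back in time yields, for every $(x,t)\in M\times(-T,0]$ and $V\in T_xM$,
\[
\partial_t G+2\langle\nabla G,V\rangle+A(V,V)+\frac{G}{2(t+T)}\ge 0 .
\]
Fixing $(x_0,t_0)\in M\times(-\infty,0]$ and $V\in T_{x_0}M$, applying this for every $T>-t_0$, and letting $T\to\infty$ makes the last term vanish (since $G(x_0,t_0)$ is finite while $t_0+T\to\infty$), giving \eqref{Harnack ancient} at $(x_0,t_0)$. The only real work beyond invoking Theorem~\ref{Harnack} is the verification in the preceding paragraph---above all, producing the scale-invariant gradient estimate \eqref{gradient estimate} from noncollapsing and uniform ellipticity via the Brendle--Huisken estimate---so I expect that to be the main (though not deep) obstacle; the limiting step itself is immediate.
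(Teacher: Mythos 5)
Your proposal is correct and follows essentially the same route as the paper: shift the interval $[-T,0]$ to $[0,T]$, verify the hypotheses of Theorem~\ref{Harnack} using the lemmas of Section~\ref{sec unif inverse-concavity} for \eqref{unif ellipticity} and \eqref{pert inverse-concave} and the Brendle--Huisken interior estimate (as generalised in the author's thesis) for \eqref{gradient estimate}, then translate back and send $T\to\infty$ to kill the $\tfrac{G}{2(t+T)}$ term. The one minor caveat you raise about needing a slightly larger interval for the gradient estimate is harmless since the hypotheses hold on $[-T,0]$ for every finite $T$.
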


\subsection{Examples} Consider the concave speeds (cf. \cite{Brendle_Huisken}) given by
    \[\gamma(\lambda) = \bigg(\sum_{i_1 < \dots < i_k} \frac{1}{\lambda_{i_1} + \dots + \lambda_{i_k}}\bigg)^{-1},\]
for $k \geq 2$ and $n \geq k+1$. In this case we may take
    \[\Gamma = \Big\{\lambda : \min_{i_1<\dots<i_k} \lambda_{i_1} + \dots + \lambda_{i_k}>0\Big\}.\]
We then have $m_{\IC} = m_* = n-k+1$, so Theorem~\ref{Harnack k-convex} applies to convex ancient solutions of \eqref{flow} which are noncollapsing and uniformly $k$-convex. 

Consider the ratios of elementary symmetric polynomials $\gamma = \sigma_k/\sigma_{k-1}$ for $k \geq 2$ and $n \geq k+1$. We may take $\Gamma$ to be the cone where $\sigma_{k} > 0$, in which case $m_* = k$ and $m_{\IC} = k + 1$ (the function $\gamma_k$ is the harmonic mean, which is inverse-concave but not strictly inverse-concave). So Theorem~\ref{Harnack k-convex} applies to convex ancient solutions which are noncollapsing and uniformly $(n-k)$-convex. 

If $\gamma : \Gamma \to \mathbb{R}$ is strictly inverse-concave, $\beta : \Gamma \to \mathbb{R}$ is inverse-concave, and $h:\mathbb{R}_+^2 \to \mathbb{R}$ is inverse-concave, then the composition 
    \[\lambda \mapsto h(\gamma(\lambda), \beta(\lambda))\]
is strictly inverse-concave. Using this observation, one finds that Theorem~\ref{Harnack k-convex} applies to convex, noncollapsing, uniformly $k$-convex ancient solutions to the flows introduced in \cite{Lynch}. This class includes all blow-up limits at a singularity of a compact embedded solution. 

\subsection{Translating solitons} Harnack inequalities are closely related to solitons. A solution to \eqref{flow} is called a translating soliton if there is a constant vector $\xi$ on $\mathbb{R}^{n+1}$ such that the hypersurfaces $M_t = F(M,t)$ satisfy $M_t = M_0 + t\xi$. Translating solitons are characterised by the identity
    \[G = -\langle \xi, \nu \rangle.\]

Notice that when $A > 0$ we have 
\[2\langle \nabla G, V\rangle + A(V,V) \geq -A^{-1}(\nabla G, \nabla G),\]
with equality for $V = - A^{-1}(\nabla G)$. In this case \eqref{Harnack ancient} becomes  
    \[\partial_t G - A^{-1}(\nabla G, \nabla G) \geq 0.\]
If equality is attained here, and $\gamma$ is strictly inverse-concave, then the solution is a translating soliton.

\begin{corollary}\label{translator}
Suppose $\gamma$ is strictly inverse-concave. Let $F : (-\infty, 0] \to \mathbb{R}^{n+1}$ be an ancient solution to \eqref{flow} such that $A > 0$. Suppose the Harnack inequality
    \[\partial_t G - A^{-1}(\nabla G, \nabla G) \geq 0\]
holds at each point in spacetime. In addition, we assume bounded curvature,
    \[\sup_{M\times[-T,0]} G < \infty,\]
and the uniform ellipticity condition \eqref{unif ellipticity} on $[-T,0]$ for each $T < \infty$. If there is a point in spacetime at which
    \[\partial_t G - A^{-1}(\nabla G, \nabla G) = 0,\]
then $F$ is a translating soliton. 
\end{corollary}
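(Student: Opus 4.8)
The plan is to combine the strong maximum principle with the equality (rigidity) case of the computation underlying Theorem~\ref{Harnack}. Write $\dot\gamma^{ij} = \partial\gamma/\partial A_{ij}$ and set
\[
Q := \partial_t G - A^{-1}(\nabla G, \nabla G).
\]
Since $A > 0$, the optimal choice of $V$ in \eqref{Harnack ancient} is $V^\ast := -A^{-1}\nabla G$ (equivalently $A_{ij}V^{\ast j} = -\nabla_i G$), and $Q = \min_V\bigl(\partial_t G + 2\langle\nabla G, V\rangle + A(V,V)\bigr)$ equals this minimum. By hypothesis $Q \ge 0$ on $M\times(-\infty,0]$, and $Q(x_0,t_0) = 0$ for some $(x_0,t_0)$.

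First I would revisit the evolution of the Harnack quantity established in the proof of Theorem~\ref{Harnack} (see also \cite{Andrews_Harnack}). Inserting the evolution equations for $g$, $\nu$, $A$ and $G$ under \eqref{flow}, together with the inverse-concavity inequality \eqref{IC}, one obtains a parabolic differential inequality of the form
\[
\partial_t Q \ \ge\ \dot\gamma^{ij}\nabla_i\nabla_j Q + \langle b, \nabla Q\rangle + c\,Q + \mathcal{F},
\]
where $b$ and $c$ are locally bounded and $\mathcal{F}\ge 0$ is an error term which, for inverse-concave $\gamma$, is a sum of manifest squares together with a term of the form \eqref{IC} applied to a fixed tensor built from $\nabla A$ and $V^\ast$. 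The hypotheses $\sup G < \infty$ and \eqref{unif ellipticity} force $\sup|A|<\infty$, so $\dot\gamma^{ij}\nabla_i\nabla_j$ is locally uniformly parabolic with locally bounded coefficients; since $M$ is connected, the strong maximum principle promotes $Q(x_0,t_0)=0$ to $Q\equiv 0$ on $M\times(-\infty,t_0]$. Substituting $Q\equiv 0$ back into the inequality (so that $\partial_tQ$, $\nabla Q$ and $\dot\gamma^{ij}\nabla_i\nabla_jQ$ all vanish) forces $\mathcal{F}\equiv 0$ there, and since $\gamma$ is \emph{strictly} inverse-concave the \eqref{IC}-type quadratic form appearing in $\mathcal{F}$ is definite on the relevant tensors; hence $\mathcal{F}\equiv 0$ yields the pointwise identity $\nabla_i V^\ast_j = G\,A_{ij}$ on $M\times(-\infty,t_0]$.

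It then remains to integrate this identity. Consider the ambient vector field $\Xi := dF(V^\ast) - G\nu$ along the flow. A short computation with the Gauss--Weingarten relations shows that the normal component of $D_{e_i}\Xi$ vanishes identically (by the defining relation $A_{ij}V^{\ast j} = -\nabla_i G$), while its tangential component equals $\nabla_i V^\ast - G\,A(e_i)$; thus $\nabla_i V^\ast_j = G A_{ij}$ gives $D_{e_i}\Xi = 0$ for every $i$ and every $t\le t_0$, so $\Xi(\cdot,t)$ is a fixed vector $\xi(t)$ at each such time. Moreover $\partial_t\Xi = (\partial_t V^{\ast j})e_j - Q\,\nu$, whose normal part vanishes because $Q\equiv 0$; hence $\tfrac{d}{dt}\xi(t)$ is a fixed vector tangent to $M_t$ at every point, and since $A>0$ makes the Gauss map of $M_t$ a local diffeomorphism (so $\nu(M_t)$ is open in $S^n$) we conclude $\tfrac{d}{dt}\xi(t)=0$. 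Therefore $\Xi\equiv\xi$ for a single constant vector $\xi\in\mathbb{R}^{n+1}$ on $M\times(-\infty,t_0]$, and $\langle\xi,\nu\rangle = \langle dF(V^\ast)-G\nu,\nu\rangle = -G$ there. Consequently $\partial_t(F - t\xi) = -G\nu - \xi$ has vanishing normal component, i.e.\ it is the purely tangential (reparametrising) field $-dF(V^\ast)$, so $F$ restricted to $(-\infty,t_0]$ is a translating soliton with velocity $\xi$; by forward uniqueness of smooth solutions to \eqref{flow}, $F$ agrees with this translating soliton on all of $(-\infty,0]$.

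The main obstacle I expect is the implication $\mathcal{F}\equiv 0 \Rightarrow \nabla_i V^\ast_j = G A_{ij}$: one must have $\mathcal{F}$ in sufficiently explicit form to identify which tensor the \eqref{IC}-form acts on, and then verify that strict inverse-concavity makes that form definite precisely on the subspace that tensor occupies---the degenerate radial directions dictated by one-homogeneity have to be ruled out using the remaining squares in $\mathcal{F}$ together with $\nabla Q\equiv 0$. The other ingredients are routine: the strong maximum principle on the noncompact $M$ (this is where bounded curvature and \eqref{unif ellipticity} are used; unlike in Theorem~\ref{Harnack}, no gradient estimate is needed because the Harnack inequality is now an assumption) and the integration of the soliton identity.
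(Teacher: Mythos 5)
Your proposal follows the paper's strategy essentially step for step: the strong maximum principle propagates the Harnack equality to all of $M\times(-\infty,t_0]$, strict inverse-concavity forces the error term in the evolution equation to vanish, and that rigidity integrates to a soliton identity. The identity you extract, $\nabla_i V^\ast_j = G A_{ij}$, is equivalent (using the Codazzi equation and the first-variation formula $\nabla_t A_{ik} = \nabla_i\nabla_k G + GA^2_{ik}$) to the paper's form $\nabla_t A_{ij} - A^{-1}(\nabla G,\nabla A_{ij}) = 0$. Your worry about ``radial degeneracy'' can be dismissed: for one-homogeneous $\gamma$ one has $\ddot\gamma^{ij,kl}(A)A_{kl}=0$ and $\dot\gamma^{ij}A_{ij}=\gamma(A)$, so $(\ddot\gamma^{ij,kl}+2\dot\gamma^{ik}A^{-1}_{jl})A_{ij}A_{kl} = 2\gamma(A)>0$; the form is strictly positive on all nonzero $S\in\Sym(n)$, not just non-radial ones. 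Your explicit argument for time-independence of $\xi$ --- commute $\partial_t$ with $D_i$ to get that $\partial_t\xi$ is spatially constant, observe that its normal component is $-Q\equiv 0$, and use that $A>0$ makes the Gauss image open --- is a step the paper leaves implicit, and it is worth spelling out.

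The one genuine gap is the final sentence. You extend the soliton identity from $(-\infty,t_0]$ to $(-\infty,0]$ ``by forward uniqueness of smooth solutions to \eqref{flow}.'' For complete noncompact hypersurfaces and a fully nonlinear flow, forward uniqueness is not an off-the-shelf fact: in general it fails without growth control, and the uniqueness results that do exist require exactly the ellipticity and boundedness hypotheses you happen to be carrying. Appealing to uniqueness as a black box therefore leaves a hole. The paper fills it with a short self-contained barrier argument: set $u := G + \langle\xi,\nu\rangle$, note that $u$ solves the linear equation $\partial_t u = \dot\gamma^{ij}\nabla_i\nabla_j u + \dot\gamma^{ij}A_{ik}A_{kj}\,u$, check that $h(x,t)=e^{Kt}(|x|^2+1)$ is a strict supersolution for $K$ large (here bounded curvature and \eqref{unif ellipticity} are used), and conclude by the maximum principle that $\sup_{M_t}|u|/h$ is nonincreasing on $[t_0,0]$; since $u$ vanishes at $t=t_0$ it vanishes identically. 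You should either supply such a barrier argument or cite a uniqueness theorem whose hypotheses you can verify in this setting.
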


\subsection{Pointwise Harnack estimate} When $A> 0$, \eqref{Harnack V} implies a pointwise estimate comparing $G$ at different points in spacetime. Indeed, assuming $A > 0$, the inequality \eqref{Harnack V} may be restated as 
    \[\partial_t G -A^{-1}(\nabla G, \nabla G) + \frac{G}{2t} \geq 0.\]
Given times $0 < t_0 < t_1 \leq T$, integration of this inequality yields
    \[G(x_1, t_1) \geq \bigg(\frac{t_0}{t_1}\bigg)^{\frac{1}{2}} \exp\bigg(-\frac{1}{4}\inf_\gamma \int_{t_0}^{t_1} \frac{A(\dot\gamma,\dot\gamma)}{G}\,dt\bigg)\,G(x_0,t_0),\]
where the infimum is over smooth paths $\gamma : [t_0,t_1] \to M$ satisfying $\gamma(t_0) = x_0$ and $\gamma(t_1) = x_1$.

\subsection{Acknowledgements} The author is grateful to M. Langford for his valuable comments. 

\section{Uniform inverse-concavity}\label{sec unif inverse-concavity} 

In this section we establish conditions under which \eqref{unif ellipticity} and \eqref{pert inverse-concave} hold. We begin by introducing some notation. 

We write $\dot \gamma^i(\lambda)$ and $\ddot \gamma^{ij}(\lambda)$ for derivatives with respect to eigenvalues, so that 
    \[\frac{d}{ds}\bigg|_{s=0} \gamma(\lambda + s \mu) = \dot \gamma^i(\lambda)\mu_i, \qquad \frac{d}{ds}\bigg|_{s=0} \dot \gamma^i(\lambda + s\mu) = \ddot \gamma^{ij}(\lambda)\mu_j,\]
and write $\dot \gamma^{ij}(A)$ and $\ddot \gamma^{ij,kl}(A)$ for derivatives with respect to matrix entries, so that 
    \[\frac{d}{ds}\bigg|_{s=0} \gamma(A + s B) = \dot \gamma^{ij}(A)B_{ij}, \qquad \frac{d}{ds}\bigg|_{s=0} \dot \gamma^{ij}(A + sB) = \ddot \gamma^{ij,kl}(A)B_{kl}.\]
When $A$ is a diagonal matrix with entries $\lambda$, $\dot\gamma^{ij}(A)$ is also diagonal, with entries $\dot\gamma^i(\lambda)$.

Let $\operatorname{Sym}(n)$ denote the space of symmetric $n\times n$-matrices. We assume that $\gamma$ is strictly inverse-concave. That is, the function $A \mapsto -\gamma(A^{-1})$ is strictly concave for positive-definite $A \in \operatorname{Sym}(n)$. In terms of derivatives this means that
    \[(\ddot\gamma^{ij,kl}(A) + 2\dot\gamma^{ik}(A)A^{-1}_{jl})S_{ij}S_{kl} > 0\]
for every positive-definite $A \in \operatorname{Sym}(n)$ and every nonzero $S \in \operatorname{Sym}(n)$. Since $\gamma$ is homogeneous of degree one, its strict inverse-concavity is also equivalent to strict concavity of the function $A \mapsto \gamma(A^{-1})^{-1}$ in non-radial directions. In terms of derivatives,
    \[(\ddot\gamma^{ij,kl}(A) + 2\dot\gamma^{ik}(A)A^{-1}_{jl} - 2\gamma(A)^{-1}\dot\gamma^{ij}(A)\dot\gamma^{kl}(A))S_{ij}S_{kl} > 0\]
for every positive-definite $A \in \operatorname{Sym}(n)$ and every $S \in \operatorname{Sym}(n)$ which is not a multiple of $A$. 

Let $\Gamma'$ be a closed, symmetric, convex cone which is contained in the closure of $\Gamma_+^n$. We are interested in consequences of the property
    \begin{equation}\label{unif distance}
        \min_{0 \leq m < m_{\IC}} \bigg(\inf_{\lambda \in \Gamma'} \dist(\tfrac{\lambda}{|\lambda|}, \Gamma_+^m)\bigg)>0.
    \end{equation}

\begin{lemma}\label{lemma unif parabolic}
Suppose $\Gamma'$ satisfies \eqref{unif distance}. We then have 
    \[\inf_{\lambda \in \Gamma'} \dist(\tfrac{\lambda}{|\lambda|}, \partial \Gamma) > 0.\]
\end{lemma}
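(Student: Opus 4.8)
The plan is to argue by contradiction using compactness of the projectivized cone. Suppose the conclusion fails; then there is a sequence $\lambda^{(j)} \in \Gamma'$ with $|\lambda^{(j)}| = 1$ and $\dist(\lambda^{(j)}, \partial\Gamma) \to 0$. By compactness of the unit sphere, after passing to a subsequence we may assume $\lambda^{(j)} \to \lambda^{(\infty)}$, and since $\Gamma'$ is closed we have $\lambda^{(\infty)} \in \Gamma'$ with $|\lambda^{(\infty)}| = 1$. Because the distance to $\partial\Gamma$ is continuous and the limit has distance zero, we get $\lambda^{(\infty)} \in \partial\Gamma$. On the other hand $\Gamma' \subset \overline{\Gamma_+^n}$, so $\lambda^{(\infty)}$ has all entries nonnegative, and being on $\partial\Gamma$ it cannot lie in the open cone $\Gamma$, hence it must lie on $\partial\Gamma_+^n$ as well (any point of $\overline{\Gamma_+^n}\setminus\Gamma_+^n$ is on the boundary of the positive cone). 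Thus $\lambda^{(\infty)}$ lies in some facet: it has exactly $m$ strictly positive entries for some $0 \leq m \leq n$, with the remaining $n-m$ entries equal to zero, which is precisely the condition $\lambda^{(\infty)} \in \Gamma_+^m$ (using that all entries are $\geq 0$ and the cone is symmetric under permutations).

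The key point is now to rule out $m \geq m_{\IC}$. If $m \geq m_{\IC}$, then after a permutation $\lambda^{(\infty)} = (0,\dots,0,\mu_1,\dots,\mu_m)$ with all $\mu_i > 0$, so $\lambda^{(\infty)}$ is a boundary point of $\Gamma_+^n$ at which the function $\gamma_m$ from the definition in the excerpt is evaluated at the interior point $(\mu_1,\dots,\mu_m) \in \mathbb{R}_+^m$. But $\gamma$ is smooth and positive on all of $\Gamma$, and in particular on a neighbourhood of every point of $\Gamma_+^m \subset \Gamma$ (we have $\Gamma_+^m \subset \Gamma$ whenever $m \geq m_* $, and $m_{\IC} \geq m_*$); so $\lambda^{(\infty)} \in \Gamma_+^m \subset \Gamma$, contradicting $\lambda^{(\infty)} \in \partial\Gamma$ since $\Gamma$ is open. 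Hence we must have $m < m_{\IC}$, i.e. $\lambda^{(\infty)} \in \Gamma_+^m$ for some $m$ with $0 \leq m < m_{\IC}$.

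Finally, this contradicts the hypothesis \eqref{unif distance}: since $\lambda^{(\infty)}$ is a unit vector in $\Gamma'$ lying in $\Gamma_+^m$ for some $m < m_{\IC}$, we have $\dist(\lambda^{(\infty)}/|\lambda^{(\infty)}|, \Gamma_+^m) = \dist(\lambda^{(\infty)}, \Gamma_+^m) = 0$, so
    \[\min_{0 \leq m < m_{\IC}} \Big(\inf_{\lambda \in \Gamma'} \dist(\tfrac{\lambda}{|\lambda|}, \Gamma_+^m)\Big) = 0,\]
which is the negation of \eqref{unif distance}. This completes the contradiction and proves the lemma.

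I expect the main (only mildly delicate) obstacle to be the bookkeeping in the second paragraph: correctly identifying which facet $\partial\Gamma$ meets $\overline{\Gamma_+^n}$ in, and invoking that $\Gamma_+^m \subset \Gamma$ for all $m \geq m_*$ — together with $m_{\IC} \geq m_*$ — to conclude that a boundary point of $\Gamma$ inside $\overline{\Gamma_+^n}$ must sit on a low-index facet $\Gamma_+^m$ with $m < m_{\IC}$. Everything else is a routine compactness-and-continuity argument.
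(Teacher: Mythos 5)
Your proof is correct and follows essentially the same route as the paper: extract a convergent subsequence on the unit sphere, use closedness of $\Gamma'$ to land in $\Gamma'$, identify $\partial\Gamma\cap\overline{\Gamma_+^n}$ with the low-dimensional facets $\Gamma_+^m$ ($m<m_*\leq m_{\IC}$), and contradict \eqref{unif distance}. The only cosmetic difference is that the paper minimizes the distance and shows the limit lies in the open cone $\Gamma$, whereas you run the contradiction directly; both rest on the same compactness and facet-structure observations.
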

\begin{proof}
By definition, $\Gamma_+^m \cap \Gamma = \emptyset$ for $m < m_*$, and $\Gamma_+^m \subset \Gamma$ for $m \geq m_*$. It follows that
    \begin{equation}\label{boundary intersect positive}
        \partial \Gamma \cap \bar \Gamma_+^n = \bigcup_{0 \leq m < m_*} \Gamma_+^m.
    \end{equation}
    
The property \eqref{unif distance} implies 
    \begin{equation}\label{dist to m*}
        \min_{0 \leq m < m_*} \bigg(\inf_{\lambda \in \Gamma'} \dist(\tfrac{\lambda}{|\lambda|}, \Gamma_+^m)\bigg) > 0.
    \end{equation}
Indeed, we have $m_* \leq m_{\IC}$ by definition. Define
    \[\delta := \inf_{\lambda \in \Gamma'} \dist(\tfrac{\lambda}{|\lambda|}, \partial \Gamma).\]
The claim is that $\delta > 0$. There is a sequence $\lambda^{(k)} \in \Gamma'$ such that $|\lambda^{(k)}| = 1$ and $\dist(\lambda^{(k)}, \partial \Gamma) \to \delta$ as $k \to \infty$. Since $\Gamma'$ is closed, we may assume $\lambda^{(k)}$ converges to some $\lambda \in \Gamma'$. Consider the possibility that $\lambda \in \partial \Gamma$. In this case \eqref{boundary intersect positive} implies $\lambda \in \Gamma_+^m$ for some $1 \leq m < m_*$. But due to \eqref{dist to m*} this is impossible. So we must have $\lambda \in \Gamma$, and hence $\delta > 0$.
\end{proof}

Given a symmetric matrix $A$, we define $\lambda(A)$ to be the eigenvalues of $A$. We define $\lambda^{-1}(\Gamma')$ to be the space of symmetric matrices with eigenvalues in $\Gamma'$. In addition, $\lambda^{-1}(\Gamma' \cap \Gamma_+^n)$ will denote the space of positive-definite symmetric matrices with eigenvalues in $\Gamma'$. 

Lemma~\ref{lemma unif parabolic} shows that if \eqref{unif distance} holds then $\{\lambda \in \Gamma' : |\lambda| = 1\}$ is a compact subset of $\Gamma$. Since $\gamma$ is homogeneous of degree one we conclude that if \eqref{unif distance} holds then there is a constant $C = C(n,\gamma,\Gamma')$ such that
    \[C^{-1}|\xi|^2 \leq \dot\gamma^{ij}(A)\xi_i\xi_j \leq C|\xi|^2, \qquad \gamma(A)\ddot\gamma^{ij,kl}(A)S_{ij}S_{kl} \geq - C|S|^2\]
for every $A \in \lambda^{-1}(\Gamma')$, $\xi \in \mathbb{R}^n$ and $S \in \Sym(n)$. 

\begin{lemma}\label{strict iff unif}
Suppose \eqref{unif distance} holds. There is then a positive constant $\kappa = \kappa(n,\gamma,\Gamma')$ such that 
    \begin{equation}\label{unif inverse-concave}
    \gamma(A)(\ddot\gamma^{ij,kl}(A) + 2\dot\gamma^{ik}(A)A^{-1}_{jl})S_{ij}S_{kl} \geq \kappa |S|^2
    \end{equation}
for every $A \in \lambda^{-1}(\Gamma' \cap \Gamma_+^n)$ and $S \in \Sym(n)$.
\end{lemma}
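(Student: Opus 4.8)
The plan is to argue by contradiction, exploiting compactness together with the strictness of inverse-concavity away from the problematic facets. Suppose no such $\kappa$ exists. Then there are sequences $A^{(k)} \in \lambda^{-1}(\Gamma' \cap \Gamma_+^n)$ and $S^{(k)} \in \Sym(n)$ with $|S^{(k)}| = 1$ such that
\[
\gamma(A^{(k)})(\ddot\gamma^{ij,kl}(A^{(k)}) + 2\dot\gamma^{ik}(A^{(k)})(A^{(k)})^{-1}_{jl})S^{(k)}_{ij}S^{(k)}_{kl} \to L \leq 0.
\]
Since the left-hand side is invariant under the scaling $A \mapsto cA$ (using that $\gamma$ is one-homogeneous, so $\dot\gamma^{ij}$ is $0$-homogeneous, $\ddot\gamma^{ij,kl}$ is $(-1)$-homogeneous, $A^{-1}$ is $(-1)$-homogeneous), we may normalise $|\lambda(A^{(k)})| = 1$. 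By Lemma~\ref{lemma unif parabolic}, the set $\{\lambda \in \Gamma' : |\lambda| = 1\}$ is a compact subset of $\Gamma$, so after passing to a subsequence $A^{(k)} \to A_\infty$ with $\lambda(A_\infty) \in \Gamma' \subset \bar\Gamma_+^n$ and $|\lambda(A_\infty)| = 1$, and $S^{(k)} \to S_\infty$ with $|S_\infty| = 1$.

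The key dichotomy is whether $A_\infty$ is positive-definite. If it is, then strict inverse-concavity of $\gamma$ at $A_\infty$ (applied to $S_\infty \neq 0$) gives that the limiting quantity is strictly positive, contradicting $L \leq 0$. The harder case is when $A_\infty$ lies on $\partial\Gamma_+^n$, i.e.\ $\lambda(A_\infty) \in \Gamma_+^m$ for some $m$; here the term $2\dot\gamma^{ik}(A^{(k)})(A^{(k)})^{-1}_{jl}S^{(k)}_{ij}S^{(k)}_{kl}$ involves $(A^{(k)})^{-1}$, which may blow up. I expect this to be the main obstacle. The resolution is that \eqref{unif distance} forces $m \geq m_{\IC}$, so $\gamma_m$ is \emph{strictly} inverse-concave. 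One must then show that the limiting form, suitably interpreted, is controlled from below by the strict inverse-concavity of $\gamma_m$. Concretely: diagonalise $A^{(k)}$, splitting eigenvalues into a block bounded away from zero (sizes $\to \lambda^\infty_i > 0$, with $\gamma$ restricted to these behaving like $\gamma_m$) and a block tending to zero. The term $2\dot\gamma^{ik}(A^{(k)})(A^{(k)})^{-1}_{jl}$ is manifestly positive-semidefinite as a quadratic form in $S$ (since $\dot\gamma^{ik} > 0$ and $(A^{(k)})^{-1} > 0$), so dropping the contribution of the vanishing block and the off-diagonal entries of $S$ only decreases the quantity; what remains converges to the strictly positive expression coming from strict inverse-concavity of $\gamma_m$ at the positive-definite $m\times m$ limit, evaluated on the projection of $S_\infty$ to the non-vanishing block. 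The only way this projection can vanish is if $S_\infty$ is concentrated on the vanishing block or off-diagonal part; in that case one extracts a positive lower bound directly from the $2\dot\gamma^{ik}(A^{(k)})(A^{(k)})^{-1}_{jl}S_{ij}S_{kl}$ term itself, since $(A^{(k)})^{-1}$ is large precisely on the block where $S_\infty$ concentrates while $\dot\gamma^{ik}$ stays uniformly positive by the ellipticity bound from Lemma~\ref{lemma unif parabolic}. Either way we contradict $L \leq 0$.

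The bookkeeping to make the block decomposition rigorous — tracking how $\dot\gamma^{ij}(A^{(k)})$, $\ddot\gamma^{ij,kl}(A^{(k)})$ and $(A^{(k)})^{-1}$ behave as some eigenvalues degenerate, and verifying that the degenerate directions only help — is the technical heart of the argument. The invariance under $O(n)$ lets us work with diagonal $A^{(k)}$ throughout, reducing everything to the eigenvalue functions $\dot\gamma^i$, $\ddot\gamma^{ij}$ and the standard formula for the second derivative of an $O(n)$-invariant function, at which point the relation between $\gamma$ near $\Gamma_+^m$ and $\gamma_m$ becomes transparent.
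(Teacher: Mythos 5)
Your argument is essentially the same as the paper's: contradiction via compactness, normalising so that $A^{(k)}$ is diagonal with $|\lambda(A^{(k)})|=1$ and $|S^{(k)}|=1$, the dichotomy according to whether the limiting $\tilde A$ is positive-definite or lands on a facet $\Gamma_+^m$, the observation that \eqref{unif distance} forces $m \geq m_{\IC}$ so that $\gamma_m$ is strictly inverse-concave, and the block decomposition of $S$. All the ingredients are there.

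However, the logic in your ``harder case'' is not quite in the right order, and as written one step fails. You split $S = X + Y + Z$ and drop the $X,Y$ contributions from the $2\dot\gamma^{ik}A^{-1}_{jl}$ term (valid, since that form is diagonal in the entries $S_{ij}$), then claim ``what remains converges to the strictly positive expression coming from strict inverse-concavity of $\gamma_m$ \dots evaluated on the projection of $S_\infty$.'' That is false in general: what remains still contains $\gamma(A^{(k)})\ddot\gamma^{ij,pq}(A^{(k)})S^{(k)}_{ij}S^{(k)}_{pq}$ evaluated on the \emph{full} $S^{(k)}$, and this does not reduce to $\gamma_m(\hat A)\ddot\gamma_m(\hat A)(\hat Z,\hat Z)$ unless $X^{(k)},Y^{(k)} \to 0$. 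Your dichotomy ``$\hat Z \neq 0$: use strict IC of $\gamma_m$; $\hat Z = 0$: use blow-up of $A^{-1}$'' therefore leaves open the mixed case where $\hat Z \neq 0$ but $X_\infty$ or $Y_\infty$ are nonzero. The correct ordering, which is what the paper does, is to use the blow-up argument \emph{first}: $\gamma(A)\ddot\gamma^{ij,pq}(A)S_{ij}S_{pq} \geq -C|S|^2$ uniformly on $\lambda^{-1}(\Gamma')$ by Lemma~\ref{lemma unif parabolic}, while $\dot\gamma^i(\lambda^{(k)})/\lambda_j^{(k)} \to \infty$ for $j \leq n-m$, so finiteness of $\kappa$ forces $|X^{(k)}|,|Y^{(k)}| \to 0$. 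Only then is $S_\infty$ supported on the good block with $|S_\infty|=1$, the $\ddot\gamma$ term genuinely converges to the $\gamma_m$-form, and strict inverse-concavity of $\gamma_m$ yields $\kappa > 0$. This is a small fix, but without it the argument as stated does not close.
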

\begin{proof}
Let us define
    \[\kappa := \inf_{A \in \lambda^{-1}(\Gamma'\cap\Gamma_+^n), \, S \in \operatorname{Sym}(n)} |S|^{-2}\gamma(A)(\ddot\gamma^{ij,pq}(A) + 2 \dot \gamma^{ip}(A)A^{-1}_{jq})S_{ij}S_{pq}.\]
We claim that $\kappa$ is positive.

Let $A^{(k)} \in \lambda^{-1}(\Gamma'\cap\Gamma_+^n)$ and $S^{(k)} \in \operatorname{Sym}(n)$ be sequences such that 
    \[|S^{(k)}|^{-2}\gamma(A^{(k)})(\ddot\gamma^{ij,pq}(A^{(k)}) + 2 \dot \gamma^{ip}(A^{(k)})(A^{(k)})^{-1}_{jq})S^{(k)}_{ij}S^{(k)}_{pq} \to \kappa\]
as $k \to \infty$. Since $\gamma$ is homogeneous of degree one, we may assume without loss of generality that $|A^{(k)}| = 1$ and $|S^{(k)}| = 1$. Since $\gamma$ is $O(n)$-invariant we may also assume $A^{(k)}$ is diagonal. By passing to a subsequence we can arrange that $A^{(k)}$ converges to some $\tilde A$, and that $S^{(k)}$ converges to some $\tilde S$. If $\lambda(\tilde A) \in \Gamma_+^n$ then we are done---since $\gamma$ is strictly inverse-concave, it then follows that $\kappa > 0$. Suppose instead that $\lambda(A) \in \Gamma_+^{m}$ for some $1 \leq m \leq n-1$. Since $A^{(k)}$ in $\lambda^{-1}(\Gamma')$, \eqref{unif distance} implies $m \geq m_{\IC}$. Therefore, the function $\gamma_m$ is strictly inverse-concave. 

To ease notation, let us write $A = A^{(k)}$ and $S = S^{(k)}$. Let $X$ be the $n\times n$-matrix whose entries $X_{ij} = S_{ij}$ when $i,j\leq n-m$ and vanish otherwise. Let $Z$ be the $n\times n$-matrix whose entries $Z_{ij} = S_{ij}$ when $i,j > n-m$ and vanish otherwise. We then define $Y := S - X - Z$. By Lemma~\ref{lemma unif parabolic} there is a constant $C$ depending only on $n$, $\gamma$ and $\Gamma'$ such that 
    \begin{align*}
        \ddot\gamma^{ij,kl}(A)S_{ij}S_{pq} \geq - C.
    \end{align*}
In addition, writing $\lambda = \lambda(A)$, we have
    \begin{align*} 
    2 \gamma^{ip}(A)A^{-1}_{jq} S_{ij}S_{pq} &= 2\sum_{i,\,j} \frac{\dot\gamma^i(\lambda)}{\lambda_j}|S_{ij}|^2\\
    &= 2\sum_{i, \, j \leq n - m} \frac{\dot\gamma^i(\lambda)}{\lambda_j} |X_{ij}|^2 + 2\sum_{i \leq n-m, \, j > n-m}  \frac{\dot\gamma^i(\lambda)}{\lambda_j} |Y_{ij}|^2\\
    &+2\sum_{i > n-m, \, j \leq n-m}  \frac{\dot\gamma^i(\lambda)}{\lambda_j} |Y_{ij}|^2 + 2\sum_{i, \, j > n-m} \frac{\dot\gamma^i(\lambda)}{\lambda_j} |Z_{ij}|^2.
    \end{align*}
Since $\lambda_i \to 0$ for every $1 \leq i \leq n-m$, and $\kappa < \infty$, we conclude that $|X| \to 0$ and $|Y| \to 0$ as $k \to \infty$. In particular, $\tilde S_{ij} = 0$ unless $i,j > n-m$.

To finish we combine the inequalities 
    \[\ddot\gamma^{ij,pq}(A)S_{ij}S_{pq} \geq \ddot\gamma^{ij,pq}(A) Z_{ij}Z_{pq} -C(|X|^2 + |Y|^2 + |X||Y| + |X||Z| + |Y||Z|)\]
and 
    \[2 \gamma^{ip}(A)A^{-1}_{jq} S_{ij}S_{pq}  \geq 2\sum_{i, \, j >n-m} \frac{\dot\gamma^i(\lambda)}{\lambda_j} |Z_{ij}|^2 = 2\dot\gamma^{ip}(A)A^{-1}_{jq}Z_{ij}Z_{pq}\]
in order to obtain 
    \begin{align*}
        (\ddot\gamma^{ij,pq}(A) + 2 \gamma^{ip}(A)A^{-1}_{jq})& S_{ij}S_{pq}\\
        & \geq (\ddot\gamma^{ij,pq}(A)+2 \gamma^{ip}(A)A^{-1}_{jq})Z_{ij}Z_{pq}\\
        &-C(|X|^2 + |Y|^2 + |X||Y| + |X||Z| + C|Y||Z|).
    \end{align*}
Let us write $\hat A$ and $\hat S$ for the $m\times m$-matrices which coincide with the lower-right $m \times m$-blocks of $\tilde A$ and $\tilde S$, respectively. Sending $k \to \infty$ in the last inequality then yields
    \[\kappa = (\ddot\gamma^{ij,pq}_m(\hat A)+2 \gamma^{ip}_m(\hat A)\hat A^{-1}_{jq})\hat S_{ij}\hat S_{pq}.\]
Since $\gamma_m$ is strictly inverse-concave and $|\hat S| = 1$ we conclude that $\kappa > 0$. 
\end{proof}

Next we use Lemma~\ref{strict iff unif} to show that \eqref{unif distance} implies \eqref{pert inverse-concave}.

\begin{lemma}\label{lemma pert}
Suppose \eqref{unif distance} holds. There is then a positive constant $\varepsilon = \varepsilon(n,\gamma,\Gamma')$ such that
        \begin{equation}
        \gamma(A)(\ddot\gamma^{ij,kl}(A) + 2\dot\gamma^{ik}(A)(A+\varepsilon\gamma(A)I)^{-1}_{jl})S_{ij}S_{kl} \geq 0
        \end{equation}
for every $A \in \lambda^{-1}(\Gamma')$ and symmetric matrix $S$.
\end{lemma}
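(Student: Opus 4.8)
\emph{Proof plan.} The plan is to argue by contradiction and compactness, following closely the proof of Lemma~\ref{strict iff unif}. Since $\gamma$ is homogeneous of degree one and $O(n)$-invariant, it suffices to establish the inequality for diagonal $A$ with $|A|=1$ and eigenvalues in $\Gamma'$, and for $S \in \Sym(n)$ with $|S|=1$. For each fixed $\varepsilon>0$, the matrix $A+\varepsilon\gamma(A)I$ is positive-definite on the compact set $K := \{A \in \lambda^{-1}(\Gamma') : |A| = 1\}$ --- indeed Lemma~\ref{lemma unif parabolic} gives $\gamma \geq c(n,\gamma,\Gamma')>0$ on $K$ --- so the left-hand side of the asserted inequality is a continuous function of $(A,S)$ and attains its infimum over $K \times \{|S| = 1\}$. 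Suppose the lemma is false. Then there are sequences $\varepsilon_k \downarrow 0$, diagonal matrices $A^{(k)} \in K$ and $S^{(k)} \in \Sym(n)$ with $|S^{(k)}| = 1$ such that (writing matrix-derivative dummy indices as $p,q$ to free up $k$)
    \[
    \gamma(A^{(k)})\big(\ddot\gamma^{ij,pq}(A^{(k)}) + 2\dot\gamma^{ip}(A^{(k)})(A^{(k)}+\varepsilon_k\gamma(A^{(k)})I)^{-1}_{jq}\big)S^{(k)}_{ij}S^{(k)}_{pq} < 0
    \]
for all $k$. Passing to a subsequence, $A^{(k)} \to \tilde A \in K$ (still diagonal) and $S^{(k)} \to \tilde S$ with $|\tilde S| = 1$; set $\lambda = \lambda(A^{(k)})$ and $\tilde\lambda = \lambda(\tilde A)$.

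If $\tilde A$ is positive-definite, then $(A^{(k)}+\varepsilon_k\gamma(A^{(k)})I)^{-1} \to \tilde A^{-1}$ and taking $k \to \infty$ contradicts the uniform inverse-concavity estimate \eqref{unif inverse-concave} of Lemma~\ref{strict iff unif}, since $|\tilde S| = 1$. Otherwise $\tilde\lambda \in \Gamma_+^m$ for some $1 \leq m \leq n-1$; because $\tilde\lambda \in \Gamma'$ and $\Gamma'$ satisfies \eqref{unif distance}, we must have $m \geq m_{\IC}$, so $\gamma_m$ is strictly inverse-concave. At this point I would repeat the argument in the proof of Lemma~\ref{strict iff unif}: decompose $S^{(k)} = X + Y + Z$, with $X$ and $Z$ supported on the upper-left $(n-m)\times(n-m)$ and lower-right $m \times m$ blocks and $Y = S^{(k)} - X - Z$ collecting the mixed entries. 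In the eigenbasis of $A^{(k)}$,
    \[
    2\dot\gamma^{ip}(A^{(k)})(A^{(k)}+\varepsilon_k\gamma(A^{(k)})I)^{-1}_{jq}S^{(k)}_{ij}S^{(k)}_{pq} = 2\sum_{i,j}\frac{\dot\gamma^i(\lambda)}{\lambda_j+\varepsilon_k\gamma(\lambda)}|S^{(k)}_{ij}|^2 \geq 0,
    \]
and combining this with the lower bound $\gamma(A^{(k)})\ddot\gamma^{ij,pq}(A^{(k)})S^{(k)}_{ij}S^{(k)}_{pq} \geq -C$ from Lemma~\ref{lemma unif parabolic} shows the displayed sum is at most $C/c$. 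Since $\lambda_j \to 0$ for $j \leq n-m$ and $\varepsilon_k \to 0$, while $\dot\gamma^i(\lambda)$ stays bounded below by a positive constant, the coefficients of $|S^{(k)}_{ij}|^2$ with $\min(i,j)\leq n-m$ diverge; using symmetry of $S^{(k)}$ this forces all entries $S^{(k)}_{ij}$ with $\min(i,j) \leq n-m$ to tend to $0$, so $|X|,|Y| \to 0$ and $\tilde S = \tilde Z$ is supported on the lower-right block. Retaining only the $Z$-block in the (nonnegative) gradient term, bounding the Hessian term exactly as in Lemma~\ref{strict iff unif}, and letting $k \to \infty$ then yields
    \[
    0 \geq \gamma_m(\hat A)\big(\ddot\gamma_m^{ij,pq}(\hat A) + 2\dot\gamma_m^{ip}(\hat A)\hat A^{-1}_{jq}\big)\hat S_{ij}\hat S_{pq},
    \]
where $\hat A$ and $\hat S$ are the lower-right $m\times m$ blocks of $\tilde A$ and $\tilde S$; here one uses $\gamma(\tilde A) = \gamma_m(\hat A) > 0$ and the fact that, because $\tilde A$ is diagonal with its zero eigenvalues first, $\gamma$ restricted to matrices of the form $\mathrm{diag}(0,\dots,0)\oplus \hat B$ equals $\hat B \mapsto \gamma_m(\hat B)$, so the relevant second derivatives of $\gamma$ at $\tilde A$ restrict to those of $\gamma_m$ at $\hat A$. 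Since $|\hat S| = 1$ and $\gamma_m$ is strictly inverse-concave, the right-hand side is positive, a contradiction. This proves the lemma, and $\varepsilon$ may be taken to depend only on $n$, $\gamma$ and $\Gamma'$.

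The main obstacle is the degenerate case, in which $\tilde A$ has vanishing eigenvalues: there the perturbed inverse $(A+\varepsilon\gamma(A)I)^{-1}$ stays bounded (by $1/(\varepsilon\gamma(A))$) along the degenerate directions, so one cannot simply pass to the limit, and a crude comparison of $(A+\varepsilon\gamma(A)I)^{-1}$ with $A^{-1}$ --- whose difference blows up like $1/\lambda_j$ in those directions --- loses too much to conclude. The resolution, as in Lemma~\ref{strict iff unif}, is that the degenerate directions enter the gradient term with a coefficient $\dot\gamma^i(\lambda)/(\lambda_j+\varepsilon_k\gamma(\lambda))$ that diverges along the sequence, which forces the corresponding components of $S^{(k)}$ to vanish in the limit and reduces the estimate to strict inverse-concavity of $\gamma_m$ on the surviving block.
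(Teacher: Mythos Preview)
Your argument is correct, but it takes a genuinely different route from the paper's. You argue by contradiction and compactness, essentially replaying the proof of Lemma~\ref{strict iff unif} with $(A+\varepsilon_k\gamma(A)I)^{-1}$ in place of $A^{-1}$: along a minimising sequence with $\varepsilon_k\to 0$, either the limit $\tilde A$ is positive-definite (and Lemma~\ref{strict iff unif} gives the contradiction directly), or $\tilde\lambda\in\Gamma_+^m$ with $m\geq m_{\IC}$, in which case the diverging coefficients force the off-block entries of $S^{(k)}$ to vanish and you reduce to strict inverse-concavity of $\gamma_m$. The paper instead gives a direct, quantitative argument: for a fixed $A$ it introduces a threshold $\theta$, splits the indices according to whether $\lambda_i\leq\theta\gamma(A)$ or $\lambda_i>\theta\gamma(A)$, decomposes $S=X+Y+Z$ accordingly, and then estimates each term explicitly---the $Z$-block is handled by the uniform constant $\kappa$ from Lemma~\ref{strict iff unif}, and the $X,Y$ terms are absorbed using the large coefficient $(\theta+\varepsilon)^{-1}$. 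This yields explicit conditions $\theta<C^{-2}/2$ and $\varepsilon<\min\{C^{-1}\kappa\theta^2,\theta\}$. The payoff of the paper's approach is a constructive $\varepsilon=\varepsilon(n,\gamma,\Gamma')$ in terms of $\kappa$ and the ellipticity constant; your compactness argument is slightly cleaner conceptually but gives no quantitative control on $\varepsilon$.
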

\begin{proof}
Consider an arbitrary $A \in \lambda^{-1}(\Gamma')$. Set $\lambda = \lambda(A)$. Let $\theta$ be a small positive constant whose value will be fixed later, and denote by $\ell$ the integer such that $\lambda_i \leq \theta \gamma(\lambda)$ for $1 \leq i \leq \ell$ and $\lambda_i > \theta \gamma(\lambda)$ for $\ell+1 \leq i \leq n$. 

Fix an arbitrary $S \in \Sym(n)$ and write $S = X + Y +Z$, where $X_{ij} = S_{ij}$ for $i,j\leq \ell$ and $X_{ij} = 0$ otherwise, and $Z_{ij} = S_{ij}$ for $i,j > \ell$ and $Z_{ij} = 0$ otherwise.

We may assume without loss of generality that $A$ is diagonal and $\gamma(A) = 1$. We then have 
    \begin{align*}
        2\dot \gamma^{ip}(A)(A+\varepsilon&I)^{-1}_{jq}S_{ij}S_{pq}\\
        &= 2\sum_{i,j}\frac{\dot\gamma^i(\lambda)}{\lambda_j + \varepsilon } |S_{ij}|^2\\
        & = \sum_{i,j\leq \ell} \frac{\dot\gamma^i(\lambda)}{\lambda_j + \varepsilon } |X_{ij}|^2 + 2 \sum_{i \leq \ell, \, j > \ell}\frac{\dot\gamma^i(\lambda)}{\lambda_j + \varepsilon }|Y_{ij}|^2\\
        &+2\sum_{i>\ell,\,j\leq \ell}\frac{\dot\gamma^i(\lambda)}{\lambda_j + \varepsilon }|Y_{ij}|^2 + 2 \sum_{i,j > \ell}\frac{\dot\gamma^i(\lambda)}{\lambda_j + \varepsilon }|Z_{ij}|^2.
        \end{align*}
By Lemma~\ref{lemma unif parabolic} we have $\dot\gamma^i(\lambda) \geq C^{-1}$ for some positive $C = C(n,\gamma, \Gamma')$. Therefore,
    \begin{align*}
    2 \sum_{i,j > \ell}\frac{\dot\gamma^i(\lambda)}{\lambda_j + \varepsilon }|Z_{ij}|^2 &= 2 \sum_{i,j > \ell}\frac{\dot\gamma^i(\lambda)}{\lambda_j}|Z_{ij}|^2 - 2 \varepsilon \sum_{i,j > \ell}\frac{\dot\gamma^i(\lambda)}{\lambda_j(\lambda_j + \varepsilon) }|Z_{ij}|^2\\
    &\geq 2\dot \gamma^{ip}(A)(A)^{-1}_{jq}Z_{ij}Z_{pq} -C\varepsilon\theta^{-1}(\theta+\varepsilon)^{-1}|Z|^2,
    \end{align*}
and hence
    \begin{align*}
        2\dot \gamma^{ip}(A)&(A+\varepsilon I)^{-1}_{jq}S_{ij}S_{pq}\\
        &\geq  2\dot \gamma^{ip}(A)(A)^{-1}_{jq}Z_{ij}Z_{pq} + C^{-1}(\theta + \varepsilon)^{-1} (|X|^2 + |Y|^2)\\
        &-C\varepsilon\theta^{-1}(\theta+\varepsilon)^{-1}|Z|^2.
    \end{align*}
Combining this inequality with 
    \[\ddot\gamma^{ij,pq}(A)S_{ij}S_{pq} \geq \ddot\gamma^{ij,pq}(A)Z_{ij}Z_{pq} - C(|X|^2 + |Y|^2),\]
we obtain 
    \begin{align*}
        (\ddot\gamma^{ij,pq}(A) + 2&\dot \gamma^{ip}(A)(A+\varepsilon I)^{-1}_{jq})S_{ij}S_{pq}\\
        &\geq (\ddot\gamma^{ij,pq}(A) + 2\dot \gamma^{ip}(A)(A)^{-1}_{jq})Z_{ij}Z_{pq} -C\varepsilon\theta^{-1}(\theta+\varepsilon)^{-1}|Z|^2\\
        &+ (C^{-1}(\theta + \varepsilon)^{-1} - C) (|X|^2 + |Y|^2).
    \end{align*}
Lemma~\ref{strict iff unif} now gives 
    \begin{align*}
        (\ddot\gamma^{ij,pq}&(A) + 2\dot \gamma^{ip}(A)(A+\varepsilon I)^{-1}_{jq})S_{ij}S_{pq}\\
        &\geq (\kappa - C\varepsilon\theta^{-1}(\theta+\varepsilon)^{-1})|Z|^2 + (C^{-1}(\theta + \varepsilon)^{-1} - C) (|X|^2 + |Y|^2).
    \end{align*}
The right-hand side is nonnegative for $\theta < C^{-2}/2$ and $\varepsilon < \min\{ C^{-1}\kappa \theta^2, \theta\}$. 
\end{proof}

To conclude this section we show that for $k = n - m_{\IC} + 1$, uniform $k$-positivity implies \eqref{unif distance}. The converse is also true. We write 
    \[\trace(\lambda) = \lambda_1 + \dots + \lambda_n.\]

\begin{lemma}\label{lemma unif k-convex}
Let $k = n - m_{\IC} + 1$. The condition \eqref{unif distance} holds if and only if
    \begin{equation}\label{unif k-convex}
    \inf_{\lambda \in \Gamma'} \bigg(\min_{i_1 < \dots < i_k} \frac{\lambda_{i_1} + \dots + \lambda_{i_k}}{\trace(\lambda)}\bigg) > 0.
    \end{equation}
\end{lemma}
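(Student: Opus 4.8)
The plan is to reduce \emph{both} conditions to one and the same statement about the compact set $K := \{\lambda \in \Gamma' : |\lambda| = 1\}$: namely, that $K$ is disjoint from the facets $\Gamma_+^m$ of the positive orthant for all $1 \le m \le m_{\IC} - 1$.

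First I would record the elementary consequences of $\Gamma' \subseteq \overline{\Gamma_+^n}$. Every $\lambda \in \Gamma'$ has nonnegative entries, so, ordering them $\lambda_1 \le \dots \le \lambda_n$, one has $\min_{i_1 < \dots < i_k}(\lambda_{i_1} + \dots + \lambda_{i_k}) = \lambda_1 + \dots + \lambda_k$ and $|\lambda| \le \trace(\lambda) \le \sqrt{n}\,|\lambda|$, so $\trace$ is pinched between $1$ and $\sqrt n$ on $K$. Consequently \eqref{unif k-convex} is equivalent to $\inf_{\lambda \in K}(\lambda_1 + \dots + \lambda_k) > 0$. Since $\lambda \mapsto \lambda_1 + \dots + \lambda_k$ is continuous (a minimum of linear functions) and $K$ is compact, this infimum is positive unless some $\lambda \in K$ has $\lambda_1 = \dots = \lambda_k = 0$, i.e.\ at least $k$ vanishing coordinates. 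Because $k = n - m_{\IC} + 1$, a unit vector in $\overline{\Gamma_+^n}$ has at least $k$ vanishing coordinates precisely when it has at most $m_{\IC} - 1$ positive coordinates, i.e.\ when it lies in $\Gamma_+^m$ for some $1 \le m \le m_{\IC} - 1$. Hence \eqref{unif k-convex} holds if and only if $K \cap \Gamma_+^m = \emptyset$ for all $1 \le m \le m_{\IC} - 1$.

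For \eqref{unif distance} I would run the parallel argument: for each fixed $m$, the function $\lambda \mapsto \dist(\lambda, \Gamma_+^m)$ is continuous and vanishes exactly on $\overline{\Gamma_+^m}$, so compactness of $K$ gives $\inf_{\lambda \in \Gamma'} \dist(\lambda/|\lambda|, \Gamma_+^m) > 0$ if and only if $K \cap \overline{\Gamma_+^m} = \emptyset$; the $m = 0$ term is vacuous since $\Gamma_+^0 = \{0\}$. I would then identify $\overline{\Gamma_+^m} = \bigcup_{0 \le j \le m} \Gamma_+^j$ (a nonnegative vector with at most $m$ positive entries is a limit of vectors in $\Gamma_+^m$, and limits of such have at most $m$ positive entries), note that these closures are nested in $m$, and use $0 \notin K$ to conclude that \eqref{unif distance} holds if and only if $K \cap \overline{\Gamma_+^{m_{\IC}-1}} = \emptyset$, i.e.\ if and only if $K \cap \Gamma_+^m = \emptyset$ for all $1 \le m \le m_{\IC} - 1$. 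This is exactly the condition obtained above for \eqref{unif k-convex}, proving the equivalence.

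The argument is bookkeeping with compactness, and I anticipate no genuine analytic obstacle. The only point requiring care is the combinatorial translation in the middle step: keeping straight that $k = n - m_{\IC} + 1$ converts a lower bound of $k$ on the number of zero coordinates into an upper bound of $m_{\IC} - 1$ on the number of nonzero ones, together with the observation that the relevant facets $\Gamma_+^1, \dots, \Gamma_+^{m_{\IC}-1}$ (with the origin adjoined) assemble precisely into the closed facet $\overline{\Gamma_+^{m_{\IC}-1}}$, so that both \eqref{unif distance} and \eqref{unif k-convex} detect the same set.
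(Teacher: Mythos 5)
Your proof is correct and takes essentially the same route as the paper: both arguments rest on compactness of $\{\lambda \in \Gamma' : |\lambda| = 1\}$ together with the combinatorial translation between ``the $k$ smallest entries vanish'' and ``$\lambda$ lies in $\Gamma_+^m$ for some $m < m_{\IC}$.'' The only difference is organizational --- you factor both conditions through the single intermediate statement that the unit sphere in $\Gamma'$ misses the facets $\Gamma_+^1, \dots, \Gamma_+^{m_{\IC}-1}$, whereas the paper runs two direct contrapositive arguments; your version also makes explicit the passage to closures $\overline{\Gamma_+^m}$, which the paper leaves implicit.
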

\begin{proof}
Suppose first that \eqref{unif k-convex} holds. Let us define 
    \[\delta = \inf_{0 \leq m < m_{\IC}} \bigg(\inf_{\lambda \in \Gamma'} \dist(\tfrac{\lambda}{|\lambda|}, \Gamma_+^m)\bigg).\]
The claim is that $\delta > 0$. If, to the contrary, $\delta = 0$, then there is a point $\lambda \in \Gamma'$ such that $|\lambda| = 1$ and $\lambda \in \Gamma_+^m$ for some $1 \leq m < m_{\IC}$. We may assume $\lambda_1 \leq \dots \leq \lambda_n$, in which case $\lambda \in \Gamma_+^m$ implies 
    \[0 = \lambda_1 = \dots = \lambda_{n-m}, \qquad 0 < \lambda_{n-m+1} \leq \dots \leq \lambda_n.\]
On the other hand, by \eqref{unif k-convex}, we have
    \[\lambda_1 + \dots + \lambda_k > 0.\]
Since $k = n - m_{\IC} + 1 < n - m + 1$, this is a contradiction. Therefore, we must have $\delta > 0$.

Now suppose \eqref{unif distance} holds. Let us define 
    \[\rho = \inf_{\lambda \in \Gamma'} \bigg(\min_{i_1 < \dots < i_k} \frac{\lambda_{i_1} + \dots + \lambda_{i_k}}{\trace(\lambda)}\bigg).\]
The claim is that $\rho > 0$. Assume to the contrary $\rho = 0$. There is then a point $\lambda \in \Gamma'$ such that $\trace(\lambda) = 1$, $\lambda_1 \leq \dots \leq \lambda_n$, and 
    \[0 = \lambda_{1} = \dots = \lambda_k.\]
Using $k = n-m_{\IC}+1$ we conclude that $\lambda \in \Gamma_+^m$ for some $m < m_{\IC}$. This contradicts \eqref{unif distance}, so we must have $\rho > 0$. 
\end{proof}

\section{Evolution of the Harnack quantity}\label{sec evolution}

Let $F : M\times(0,T] \to \mathbb{R}^{n+1}$ a solution to \eqref{flow}. Let $g = g(t)$ denote the metric on $M$ induced by $F(\cdot,t)$. We define a time derivative acting on vector fields by
    \[\nabla_t X^i := \partial_t X^i - GA^i_j X^j,\]
and extend to tensors via the usual Leibniz rule. This time derivative has the property that $\nabla_t g = 0$. 

Given an orthonormal frame $\{e_i\}$ of tangent vectors to $M$, we write 
    \[\Delta_\gamma  = \dot\gamma^{ij}\nabla_i \nabla_j, \qquad |A|^2_\gamma = \dot\gamma^{ij}A^2_{ij} = \dot\gamma^{ij}A_{ik}A_{kj}.\]
The first variation formula for the second fundamental form asserts that
    \[\nabla_t A_{kl} = \nabla_k \nabla_l G + G A^2_{kl}.\]
Simons' identity then yields the parabolic equation
    \[(\nabla_t - \Delta_\gamma)A_{kl} = |A|^2_\gamma A_{kl} + \ddot\gamma^{ij,pq}\nabla_k A_{ij}\nabla_l A_{pq}.\]
Since $\gamma$ is homogeneous of degree one, tracing this formula with respect to $\dot\gamma^{kl}$ gives
    \[(\partial_t - \Delta_\gamma) G = |A|^2_\gamma G.\]

The above evolution equations, together with the Gauss and Codazzi relations, imply the following commutation identities
    \begin{align*}
        \Delta_\gamma \nabla_k f &= \nabla_k \Delta_\gamma f + GA_{kl} \nabla^l f - \dot\gamma^{ij}A_{ik}A_{jl}\nabla_l f - \ddot\gamma^{ij,pq}\nabla_i\nabla_j f\nabla_k A_{pq}
    \end{align*}
and
    \begin{align*}
        \partial_t \Delta_\gamma f &= \Delta_\gamma \partial_t f + 2G\dot\gamma^{ij}A_{ik}\nabla_k\nabla_j f + 2\dot\gamma^{ij}A_{jk}\nabla_i G\nabla_k f\\
        &+\ddot\gamma^{ij,kl}\nabla_i\nabla_j f \nabla_t A_{kl}.
    \end{align*}
Using these, straightforward computations yield
    \begin{align}\label{time G}
        (\partial_t - \Delta_\gamma)\partial_t G &= |A|^2_\gamma \partial_t G + 2G \dot\gamma^{ij}A_{ik}(2\nabla_t A_{kj} - GA^2_{jk})\notag\\
        &+ 2\dot\gamma^{ij}A_{jk}\nabla_i G\nabla_k G+\ddot\gamma^{ij,pq}\nabla_t A_{ij}\nabla_t A_{pq}
    \end{align}
and
    \begin{align}\label{space G}
        (\nabla_t - \Delta_\gamma)\nabla_kG 
        &=|A|^2_\gamma \nabla_k G + \dot\gamma^{ij}A_{ik}A_{jl}\nabla_l G + 2\dot\gamma^{ij}A_{il}\nabla_k A_{lj} G \notag\\
        &+\ddot\gamma^{ij,pq}\nabla_t A_{ij}\nabla_k A_{pq}.
    \end{align}

In case $A > 0$, one finds that
    \begin{align*}
        (\nabla_t - \Delta_\gamma)A^{-1}_{kl}&= -|A|^2_\gamma A^{-1}_{kl}\\
        &-(\ddot\gamma^{ij,pq} + 2\dot\gamma^{ip}A^{-1}_{jq}) A^{-1}(e_k, \nabla A_{ij})A^{-1}(e_l, \nabla A_{pq}).
    \end{align*}
We combine this formula with \eqref{space G}
and the identity
    \begin{align*}
        - 4\dot\gamma^{ij}\nabla_i A^{-1}(\nabla G, \nabla_j \nabla G) 
        &=4\dot\gamma^{ip}A^{-1}_{jq} \nabla_t A_{ij} A^{-1}(\nabla G, \nabla A_{pq}) \\
        &- 4G\dot\gamma^{ij}A_{il} A^{-1}(\nabla G, \nabla A_{lj}),
    \end{align*}
in order to derive
    \begin{align*}
        (\partial_t &- \Delta_\gamma)(A^{-1}(\nabla G, \nabla G))\\
        &= |A|^2_\gamma A^{-1}(\nabla G, \nabla G)  - 2\dot\gamma^{ij}A^{-1}(\nabla_i \nabla G, \nabla_j \nabla G)+ 2\dot\gamma^{ij}A_{jk}\nabla_i G\nabla_kG\\
        &+ 2(\ddot\gamma^{ij,pq} + 2\dot\gamma^{ip}A^{-1}_{jq}) \nabla_t A_{ij}A^{-1}(\nabla G, \nabla A_{pq})\\
        & -(\ddot\gamma^{ij,pq} + 2\dot\gamma^{ip}A^{-1}_{jq}) A^{-1}(\nabla G, \nabla A_{ij})A^{-1}(\nabla G, \nabla A_{pq}).
    \end{align*}
Combining this with \eqref{time G} and simplifying, we obtain
    \begin{align*}
        (\partial_t - \Delta_\gamma)&(\partial_t G - A^{-1}(\nabla G, \nabla G))\\
        &=|A|^2_\gamma(\partial_t G - A^{-1}(\nabla G, \nabla G))\\
        &+(\ddot\gamma^{ij,pq} + 2\dot\gamma^{ip}A^{-1}_{jq})(\nabla_t A_{ij} - A^{-1}(\nabla G, \nabla A_{ij}))(\nabla_t A_{pq} - A^{-1}(\nabla G, \nabla A_{pq})).
    \end{align*}
Consequently,
    \begin{align*}
        (\partial_t - \Delta_\gamma)&\bigg(\partial_t G - A^{-1}(\nabla G, \nabla G) + \frac{1}{2t}G\bigg)\\
        &=\bigg(|A|^2_\gamma - \frac{2}{t}\bigg)\bigg(\partial_t G - A^{-1}(\nabla G, \nabla G) + \frac{1}{2t}G\bigg)\\
        &+ \frac{2}{t}(\partial_t G - A^{-1}(\nabla G, \nabla G)) + \frac{1}{2t^2}G\\
        &+(\ddot\gamma^{ij,pq} + 2\dot\gamma^{ip}A^{-1}_{jq})(\nabla_t A_{ij} - A^{-1}(\nabla G, \nabla A_{ij}))(\nabla_t A_{pq} - A^{-1}(\nabla G, \nabla A_{pq})).
    \end{align*}

In case $A > 0$ and $\gamma$ is inverse-concave, we have
    \begin{align*}
        (\ddot\gamma^{ij,pq} + 2&\dot\gamma^{ip}A^{-1}_{jq})(\nabla_t A_{ij} - A^{-1}(\nabla G, \nabla A_{ij}))(\nabla_t A_{pq} - A^{-1}(\nabla G, \nabla A_{pq})) \\
        &\geq \frac{2}{G}(\partial_t G - A^{-1}(\nabla G, \nabla G))^2.
    \end{align*}
Inserting this above yields 
    \begin{align}\label{Harnack scalar}
        (\partial_t - \Delta_\gamma)&\bigg(\partial_t G - A^{-1}(\nabla G, \nabla G) + \frac{1}{2t}G\bigg)\notag\\
        &\geq\bigg(|A|^2_\gamma - \frac{2}{t}\bigg)\bigg(\partial_t G - A^{-1}(\nabla G, \nabla G) + \frac{1}{2t}G\bigg)\notag\\
        &+\frac{2}{G}\bigg(\partial_t G - A^{-1}(\nabla G, \nabla G) + \frac{1}{2t}G\bigg)^2.
    \end{align}
Andrews found this inequality in \cite{Andrews_Harnack} by working in the Gauss map parameterization (this simplifies the computation considerably). By applying the parabolic maximum to \eqref{Harnack scalar}, Andrews could prove the following Harnack estimate for compact, strictly convex solutions of \eqref{flow}: at each point in spacetime one has
    \[\partial_t G - A^{-1}(\nabla G, \nabla G) + \frac{1}{2t}G \geq 0.\]
This implies \eqref{Harnack V}. 

The inequality \eqref{Harnack scalar} does not seem to be suitable for proving a Harnack inequality for noncompact solutions---various difficulties arise from the fact that $A^{-1}(\nabla G, \nabla G)$ might be unbounded. To prove Theorem~\ref{Harnack}, we modify Hamilton's approach to the Harnack estimate for mean curvature flow \cite{Hamilton_Harnack_MCF}. That is, we define a form $Q$ acting on tangent vectors by
    \[Q(V) := \partial_t G + 2\langle \nabla G, V \rangle + A(V,V) + \frac{1}{2t}G,\]
and use the maximum principle to conclude that $Q$ is nonnegative. The key advantage of this is that at a spacetime minimum of $Q$, given any minimizing vector $V$, there is some freedom in how we extend $V$ to a neighbourhood. This freedom can be exploited to introduce terms which are favourable, but suitably controlled---see Remark~\ref{need for uniformity}. 

The rest of the computations in this section apply to a general solution of \eqref{flow}.

Let $V$ be a time-dependent field of tangent vectors on $M$. Following \cite{Hamilton_Harnack_MCF}, we introduce tensors
    \begin{align*}
        X_i &= \nabla_i G + A_{ij} V^j\\
        Y_{ij} &= \nabla_i V_j - GA_{ij} - \frac{1}{2t}g_{ij}\\
        W_{ij} &= \nabla_t A_{ij} + V^k \nabla_k A_{ij} + \frac{1}{2t}A_{ij}\\
        U^i &= (\nabla_t - \Delta_\gamma)V^i + \dot \gamma^{ij} A_{jk}\nabla_k G + \frac{1}{t}V^i.
    \end{align*}
We compute at a point $(x_0, t_0)$ in spacetime with respect to a local orthonormal frame for which $\nabla_i e_j = 0$ and $\nabla_t e_i = 0$ at $(x_0, t_0)$. 

\begin{proposition}\label{Harnack evolution}
At the point $(x_0,t_0)$ we have 
    \begin{align}\label{Harnack tensor}
        (\partial_t &- \Delta_\gamma)(Q(V))\notag\\
        &= \bigg(|A|^2_\gamma - \frac{2}{t}\bigg) Q(V) + 2X_iU_i - 4\dot\gamma^{ij}Y_{ik}W_{jk}-2\dot\gamma^{ij}Y_{ik}A_{kl}Y_{jl}\notag\\
        &+ \ddot\gamma^{ij,pq}(\nabla_t A_{ij} + V_k\nabla_kA_{ij})(\nabla_tA_{pq} + V_l\nabla_lA_{pq}).
    \end{align}
\end{proposition}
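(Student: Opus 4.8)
The plan is to compute $(\partial_t - \Delta_\gamma)(Q(V))$ term by term, using the evolution equations \eqref{time G}, \eqref{space G} for $\partial_t G$ and $\nabla_k G$, the first variation formula $\nabla_t A_{kl} = \nabla_k \nabla_l G + GA^2_{kl}$ together with Simons' identity $(\nabla_t - \Delta_\gamma)A_{kl} = |A|^2_\gamma A_{kl} + \ddot\gamma^{ij,pq}\nabla_k A_{ij}\nabla_l A_{pq}$, and the quoted commutation identities for $\Delta_\gamma \nabla_k f$ and $\partial_t \Delta_\gamma f$. Recall $Q(V) = \partial_t G + 2\langle \nabla G, V\rangle + A(V,V) + \tfrac{1}{2t}G$. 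We treat the four pieces separately. For $\partial_t G$ we have \eqref{time G}. For $\tfrac{1}{2t}G$ we use $(\partial_t - \Delta_\gamma)G = |A|^2_\gamma G$, which gives $(\partial_t - \Delta_\gamma)(\tfrac{1}{2t}G) = (|A|^2_\gamma - \tfrac1t)\tfrac{1}{2t}G$. The genuinely new work is the heat operator applied to $2\langle \nabla G, V\rangle$ and to $A(V,V)$; here we must differentiate the products, which introduces the inner-product terms $2\dot\gamma^{ij}\nabla_i(\nabla_k G + \text{(lower order)})\nabla_j(\cdots)$ and forces us to use the commutation identities to move $\Delta_\gamma$ past $\nabla$. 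The factor of $2$ in front of $\langle \nabla G, V\rangle$ (rather than $1$) is precisely what makes the cross terms combine into the square-completed expressions $X_i$, $Y_{ij}$, $W_{ij}$, $U^i$.

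In more detail, I would first record $(\partial_t - \Delta_\gamma)(A(V,V)) = ((\nabla_t - \Delta_\gamma)A)(V,V) + 2A_{ij}V^i(\nabla_t - \Delta_\gamma)V^j - 2\dot\gamma^{kl}\nabla_k A_{ij}V^i\nabla_l V^j - 2\dot\gamma^{kl}A_{ij}\nabla_k V^i \nabla_l V^j + (\text{terms from }\nabla_t\text{ acting on the }V\text{'s via the modified connection})$, being careful that $\nabla_t$ is the modified time derivative with $\nabla_t g = 0$, so that the metric passes through freely; and similarly $(\partial_t - \Delta_\gamma)(2\langle \nabla G, V\rangle) = 2\langle (\nabla_t - \Delta_\gamma)\nabla G, V\rangle + 2\langle \nabla G, (\nabla_t - \Delta_\gamma)V\rangle - 4\dot\gamma^{kl}\nabla_k \nabla_i G\,\nabla_l V^i$, then substitute \eqref{space G} for $(\nabla_t - \Delta_\gamma)\nabla_k G$. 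At this stage everything is expressed in terms of $G$, $A$, $V$, their covariant and modified-time derivatives, and one term $(\nabla_t - \Delta_\gamma)V$; I would then substitute the definitions of $X_i, Y_{ij}, W_{ij}, U^i$ to eliminate $(\nabla_t - \Delta_\gamma)V$ in favour of $U^i$, eliminate $\nabla_i V_j$ in favour of $Y_{ij}$, and so on. The $\ddot\gamma^{ij,pq}$ terms come from three sources: Simons' identity applied inside $(\nabla_t - \Delta_\gamma)A$, the last term of \eqref{time G}, and the last term of \eqref{space G}; these should assemble into $\ddot\gamma^{ij,pq}(\nabla_t A_{ij} + V_k\nabla_k A_{ij})(\nabla_t A_{pq} + V_l\nabla_l A_{pq})$ after completing the square — note this is exactly $W$-like but without the $\tfrac{1}{2t}A$ correction, which is a useful internal consistency check.

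The main obstacle I expect is bookkeeping: tracking the numerous first-order terms of the form $\dot\gamma^{ij}A_{jk}\nabla_i G\,(\text{something})$, $G\dot\gamma^{ij}A_{il}\nabla_k A_{lj}\,(\text{something})$, and the lower-order $\tfrac{1}{t}$-contributions, and verifying that they recombine without remainder into the claimed $2X_iU_i - 4\dot\gamma^{ij}Y_{ik}W_{jk} - 2\dot\gamma^{ij}Y_{ik}A_{kl}Y_{jl}$. The coefficients $2$, $-4$, $-2$ are rigid, so any sign or factor error will leave a visible leftover term. The computation is the fully nonlinear analogue of Hamilton's for mean curvature flow \cite{Hamilton_Harnack_MCF}; the extra ingredients are the $\dot\gamma^{ij}$ weights (harmless, since $\nabla_t \dot\gamma^{ij}$-type terms are absorbed because we evaluate at a point with $\nabla_t e_i = 0$ and never differentiate $\dot\gamma$ in the directions that would produce them beyond the $\ddot\gamma$ terms already isolated) and the second-derivative terms $\ddot\gamma^{ij,pq}$, which by the structure of \eqref{time G}, \eqref{space G} and Simons appear only in the stated combined quadratic form. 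Once \eqref{Harnack tensor} is established, no further argument is needed for the proposition itself; the choices of $V$ and of its extension, and the maximum-principle argument, are deferred to the proof of Theorem~\ref{Harnack}.
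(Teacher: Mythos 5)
Your plan is the same basic approach as the paper's (direct computation using \eqref{time G}, \eqref{space G}, Simons' identity and the commutation formulae, evaluated in a frame with $\nabla_i e_j = \nabla_t e_i = 0$), but the decomposition differs. You split $Q$ into its four raw summands $\partial_t G$, $2\langle\nabla G, V\rangle$, $A(V,V)$, $\tfrac{1}{2t}G$ and propose to recombine into $X,Y,W,U$ only at the very end. The paper instead begins with the identity $Q(V) = \dot\gamma^{ij}W_{ij} + X_iV_i$ and computes the evolution of each of \emph{those} two pieces; this pre-grouping is not merely cosmetic — it halves the number of product-rule cross terms, and the first-order debris (the $\dot\gamma^{ij}A_{il}\nabla_kA_{lj}$-type and $\tfrac1t$-type pieces) already arrives nearly packaged as $X_iU_i$, $Y_{ik}W_{jk}$, $Y_{ik}A_{kl}Y_{jl}$ rather than needing to be assembled by hand. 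Your observation that the $\ddot\gamma$ contributions from \eqref{time G}, \eqref{space G} and Simons enter with weights $1,2,1$ and complete the square to $\ddot\gamma^{ij,pq}(\nabla_tA_{ij}+V_k\nabla_kA_{ij})(\nabla_tA_{pq}+V_l\nabla_lA_{pq})$ is correct and matches the paper's output; good consistency check.

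One concrete slip worth fixing: in your expansion of $(\partial_t-\Delta_\gamma)(A(V,V))$ the cross term should be $-4\dot\gamma^{kl}\nabla_kA_{ij}V^i\nabla_lV^j$, not $-2$, because $V$ occurs twice in $A_{ij}V^iV^j$ so the Laplacian product rule produces two such terms (symmetrised by $\dot\gamma^{kl}$). Also the appended placeholder ``terms from $\nabla_t$ acting on the $V$'s'' is already subsumed in $2A_{ij}V^i(\nabla_t-\Delta_\gamma)V^j$ since $\nabla_t$ is a metric-compatible Leibniz derivative; there is nothing extra there. More generally, the proposal stops at the level of a plan: the coefficients $2$, $-4$, $-2$ in $2X_iU_i - 4\dot\gamma^{ij}Y_{ik}W_{jk} - 2\dot\gamma^{ij}Y_{ik}A_{kl}Y_{jl}$ are exactly the rigid targets you yourself flag, and the sketch does not carry the bookkeeping through to verify them. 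As you note, any error here leaves a visible remainder, so the bookkeeping is the proof; what you have written is a credible roadmap but not yet a verification of \eqref{Harnack tensor}.
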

\begin{proof}
A nice account of Hamilton's computation for the mean curvature flow is given in \cite[Proposition~10.7]{Andrews_etc}. We follow the exposition there. 

Note that 
    \[Q(V) = \dot\gamma^{ij}W_{ij} + X_iV_i.\]
Using \eqref{space G} and the first variation formula 
    \[\nabla_t A_{ij} = \nabla_i \nabla_j G + GA^2_{ij},\]
we compute
    \begin{align*}
        (\partial_t - \Delta_\gamma)(X_k V_k)&=|A|^2_\gamma X_kV_k + (X_k + A_{ik}V_i)(\nabla_t - \Delta_\gamma)V_k+2G\dot\gamma^{ij}A_{il}\nabla_kA_{lj}V_k\\
        &+ \dot\gamma^{ij}A_{il}A_{jk}V_k \nabla_l G - 2\dot\gamma^{ij}(\nabla_i X_k + V_l\nabla_iA_{kl})\nabla_j V_k\\
        &+\ddot\gamma^{ij,pq}V_k\nabla_k A_{ij}V_l\nabla_l A_{pq} + \ddot\gamma^{ij,pq}\nabla_t A_{ij} V_k\nabla_k A_{pq}.
    \end{align*}
Next we use \eqref{time G} to derive
    \begin{align*}
        (\partial_t - \Delta_\gamma)(\dot\gamma^{ij}W_{ij}) &= |A|^2_\gamma \dot\gamma^{ij}W_{ij} + \nabla_i G(\nabla_t - \Delta_\gamma)V_i + \dot\gamma^{ij}A_{il}A_{jk}\nabla_l G V_k\\
        &+2G\dot\gamma^{ij}A_{ik}(2\nabla_t A_{jk} - GA^2_{jk} + V_l\nabla_lA_{jk})\\
        &- 2\dot\gamma^{ij}\nabla_iV_k\nabla_j\nabla_k G +2\dot\gamma^{ij}A_{jk}\nabla_iG\nabla_kG - \frac{1}{2t^2}G\\
        &+\ddot\gamma^{ij,pq}\nabla_t A_{ij}\nabla_t A_{pq} + \ddot\gamma^{ij,pq}V_k\nabla_kA_{ij}\nabla_tA_{pq}.
    \end{align*}
Summing these two formulae yields
    \begin{align*}
        (\partial_t - \Delta_\gamma)(Q(V)) &= |A|^2_\gamma Q(V) + 2X_i(\nabla_t - \Delta_\gamma)V_i\\
        &+2G\dot\gamma^{ij}A_{ik}(2\nabla_t A_{jk} - GA^2_{jk} + 2V_l\nabla_lA_{jk})\\
        &-2\dot\gamma^{ij}\nabla_i V_k(2\nabla_t A_{jk} - 2GA^2_{jk} + 2\nabla_kA_{jl}V_l + A_{kl}\nabla_jV_l)\\
        &+2\dot\gamma^{ij}A_{il}A_{jk}V_k\nabla_lG + 2\dot\gamma^{ij}A_{jk}\nabla_i G \nabla_kG - \frac{1}{2t^2}G\\
        &+\ddot\gamma^{ij,pq}(\nabla_t A_{ij} + V_k\nabla_kA_{ij})(\nabla_tA_{pq} + V_l\nabla_lA_{pq}).
    \end{align*}
We may rewrite this as 
    \begin{align*}
        (\partial_t &- \Delta_\gamma)(Q(V))\\
        &= \bigg(|A|^2_\gamma - \frac{2}{t}\bigg) Q(V) + 2X_iU_i - 4\dot\gamma^{ij}Y_{ik}W_{jk}-2\dot\gamma^{ij}\nabla_iV_kA_{kl}Y_{jl}\\
        &- 2G\dot\gamma^{ij}\bigg(\frac{1}{2t}A_{jk} + GA^2_{jk}\bigg)+2\dot\gamma^{ij}\nabla_iV_k\bigg(\frac{1}{2t}A_{jk} + GA^2_{jk}\bigg)\\
        &+ 2\dot\gamma^{ij}A_{ik}\nabla_kG(A_{jl}V_l + \nabla_j G - X_j) - \frac{1}{t} \dot\gamma^{ij}\bigg(\frac{1}{2t}A_{ij} + GA^2_{ij}\bigg)\\
        &+ \ddot\gamma^{ij,pq}(\nabla_t A_{ij} + V_k\nabla_kA_{ij})(\nabla_tA_{pq} + V_l\nabla_lA_{pq}).
    \end{align*}
The first term on the penultimate line is zero. Collecting the remaining terms, the claim follows.
\end{proof}

Notice that if $A > 0$ then \eqref{Harnack scalar} is recovered when we set $V = - A^{-1}(\nabla G)$ in \eqref{Harnack tensor}.

\section{Application of the maximum principle}

We now prove Theorem~\ref{Harnack}. The function $\gamma$ is assumed to be inverse-concave (when $\gamma$ is convex the proof is analogous but much simpler). Let $F : M\times[0,T] \to \mathbb{R}^{n+1}$ be a complete solution to \eqref{flow} such that $A \geq 0$. We assume that \eqref{unif ellipticity} and \eqref{pert inverse-concave} hold on $M\times[0,T]$ for some positive constants $C$ and $\varepsilon$. That is, with respect to any orthonormal frame,
    \[C^{-1}g_{ij} \leq \dot\gamma^{ij} \leq Cg_{ij},\]
and 
    \begin{equation*}\label{pert inverse-concave local}
    (\ddot\gamma^{ij,kl} + 2\dot\gamma^{ik}(A+ \varepsilon G g)^{-1}_{jl})S_{ij}S_{kl}\geq 0
    \end{equation*}
for every symmetric $S$. We also assume bounded curvature,
    \[\Xi := \sup_{M \times [0, T]} G < \infty,\]
and pointwise gradient estimates of the form \eqref{gradient estimate},
    \[\Lambda := \sup_{M\times[0,T]} G^{-2}|\nabla A| + G^{-3}|\nabla^2 A| < \infty.\]
    
We define
    \[\hat Q(V) = Q(V) + \varphi + \psi|V|^2\]
on $M\times(0,T]$, where $\varphi(x,t)$ and $\psi(t)$ are functions to be chosen later. For now we only assume that $\varphi$ and $\psi$ are bounded from below by positive constants, and that $\varphi$ grows at spatial infinity in the following sense: for some $p_0 \in M$,
    \begin{equation}\label{phi growth}
        \lim_{R \to \infty} \inf_{(x,t)\in M\setminus B_{g(0)}(p_0, R) \times (0,T]} \varphi(x,t) = \infty.
    \end{equation}
Since $A \geq 0$ and the remaining terms in $Q$ are bounded, these assumptions ensure that $\hat Q > 0$ on $M$ at times close to zero.

Suppose, with the aim of deriving a contradiction, that $\hat Q$ fails to be positive on $M\times[0,T]$. Then there exists a spacetime point $(x_0, t_0)$ such that $\hat Q > 0$ for $0 < t < t_0$ and $\hat Q(V) = 0$ at $(x_0, t_0)$ for some $V \in T_{x_0} M$. For any extension of $V$, Proposition~\ref{Harnack evolution} shows that at $(x_0, t_0)$ we have 
    \begin{align*}
        0 &\geq - 4\dot\gamma^{ij}Y_{ik}W_{jk}-2\dot\gamma^{ij}Y_{ik}A_{kl}Y_{jl}\notag\\
        &+ \ddot\gamma^{ij,pq}(\nabla_t A_{ij} + V_k\nabla_kA_{ij})(\nabla_tA_{pq} + V_l\nabla_lA_{pq})\\
        &+(\partial_t - \Delta_\gamma)\varphi + \partial_t \psi |V|^2 + 2\psi V_i(\nabla_t - \Delta_\gamma)V_i\\
        &-2\psi\dot\gamma^{ij}\nabla_iV_k\nabla_jV_k + \bigg(\frac{2}{t}-|A|^2_\gamma\bigg)(\varphi + \psi |V|^2).
    \end{align*}    

\begin{remark}\label{need for uniformity} To derive a contradiction we would like to choose $\varphi$ and $\psi$, and an extension of $V$, so that the right-hand side of the last inequality is positive. Given any matrix $B$, we are free to extend $V$ so that, at the point $(x_0,t_0)$,
    \[Y_{ij} = B_{ij}, \qquad U = 0.\]
In case $\gamma$ is convex it is sufficient to choose $B = 0$. If $A > 0$ and $\gamma$ is inverse-concave, choosing $B_{ij} = -W_{ik}A^{-1}_{kj}$ results in a positive term which overcomes the term involving second derivatives of $\gamma$. But when the solution is noncompact, $A^{-1}$ is unbounded, and this introduces errors which cannot be absorbed using $\varphi$ and $\psi$. This is why we assume \eqref{unif ellipticity}, \eqref{pert inverse-concave} and \eqref{gradient estimate}. We extend $V$ so that 
    \begin{equation}\label{choice Y}
        Y_{ij} = -W_{ik}(A+\varepsilon G g)^{-1}_{kj}
    \end{equation}
at $(x_0,t_0)$, where $\varepsilon$ is the constant in \eqref{pert inverse-concave}. This is sufficient to overcome the term involving second derivatives of $\gamma$, and only introduces errors which are bounded in terms of $C$, $\Xi$, $\Lambda$ and $\varepsilon$.
\end{remark}

In what follows $\Theta$ is a large constant which may depend on $n$, $\gamma$, $C$, $\Xi$ and $\Lambda$. The value of $\Theta$ may change from line to line. 

Inserting \eqref{choice Y}, we obtain
    \begin{align*}- 4\dot\gamma^{ij}Y_{ik}W_{jk} - 2\dot\gamma^{ij}Y_{ik}A_{kl}Y_{jl}=2\dot\gamma^{ik}(A+\varepsilon G g)^{-1}_{jl}W_{ij}W_{kl} + 2\varepsilon G \dot\gamma^{ij}Y_{ik}Y_{jk}.
    \end{align*}
Since $\gamma$ is inverse-concave and homogeneous of degree one, if $A$ is positive-definite at $(x_0,t_0)$ then we have
    \[\ddot\gamma^{ij,pq}A_{ij}S_{pq} =-(2\dot\gamma^{ip}A^{-1}_{jq} - 2G^{-1}\dot\gamma^{ij}\dot\gamma^{pq})A_{ij}S_{pq} = 0\]
for every symmetric matrix $S$. Actually the same holds even if $A$ is only nonnegative---to see this, approximate $A_{ij}$ by a sequence of positive-definite symmetric matrices and pass to the limit. It follows that 
    \begin{align*}
        \ddot\gamma^{ij,pq}&(\nabla_t A_{ij} + V_k\nabla_kA_{ij})(\nabla_t A_{pq} + V_l\nabla_l A_{pq})=\ddot\gamma^{ij,pq}W_{ij}W_{pq},
    \end{align*}
and hence 
    \begin{align*}
        \ddot\gamma^{ij,pq}&(\nabla_t A_{ij} + V_k\nabla_kA_{ij})(\nabla_t A_{pq} + V_l\nabla_l A_{pq})- 4\dot\gamma^{ij}Y_{ik}W_{jk} - 2\dot\gamma^{ij}A_{jl}Y_{ki}Y_{kl}\\
        &= (\ddot\gamma^{ij,pq} + 2\dot\gamma^{ip}(A+\varepsilon G g)^{-1}_{jq})W_{ij}W_{pq} + 2\varepsilon G \dot\gamma^{ij}Y_{ik}Y_{jk}.
    \end{align*}
The right-hand side is nonnegative by hypothesis. Moreover, if we extend $V$ so that $U = 0$ at $(x_0, t_0)$, then 
    \[2\psi V_i(\nabla_t - \Delta_\gamma)V_i \geq - \Theta\psi|V| - \frac{2}{t}\psi|V|^2.\]
Combining these facts we find that
    \begin{align*}
        0 &\geq (\partial_t - \Delta_\gamma)\varphi + (\partial_t \psi - \Theta\psi) |V|^2 -2\psi\dot\gamma^{ij}\nabla_iV_k\nabla_jV_k+ \bigg(\frac{2}{t}-|A|^2_\gamma\bigg)\varphi - \Theta\psi
    \end{align*}
at $(x_0,t_0)$. 

We may assume without loss of generality $\varepsilon < 1$. Since 
    \[G|(A+\varepsilon G g)^{-1}| \leq \Theta \varepsilon^{-1},\] 
we have 
    \begin{align*}
        \dot\gamma^{ij}\nabla_iV_k &\nabla_jV_k \\
        &\leq \Theta(|Y|^2 + G^2|A|^2 + t^{-2})\\
        &\leq \Theta\varepsilon^{-2}G^{-2}|W|^2 + \Theta(t^{-2} + 1)\\
        &\leq \Theta \varepsilon^{-2}(G^{-2}|\nabla^2 A|^2 + G^{-2}|\nabla A|^2|V|^2 + G^{-2}|A|^6 + t^{-2}G^{-2}|A|^2)\\
        &+\Theta(t^{-2} + 1)\\
        &\leq \Theta \varepsilon^{-2}(G^{-6}|\nabla^2 A|^2 + G^{-4}|\nabla A|^2|V|^2 + t^{-2} + 1).
    \end{align*}
As a consequence of the pointwise gradient estimates, 
    \[-2\psi\dot\gamma^{ij}\nabla_iV_k\nabla_jV_k \geq - \Theta\varepsilon^{-2} \psi |V|^2 - \Theta\varepsilon^{-2}\bigg(1+\frac{1}{t^2}\bigg)\psi.\]
Therefore, at $(x_0, t_0)$ we have 
    \begin{align*}
        0 &\geq (\partial_t - \Delta_\gamma)\varphi + (\partial_t \psi - \Theta\varepsilon^{-2}\psi)|V|^2 + \bigg(\frac{2}{t}-\Theta\bigg)\varphi -\Theta\varepsilon^{-2}\bigg(1+\frac{1}{t^2}\bigg)\psi.
    \end{align*}

Let $b$, $K$ and $L$ be positive constants. For $t \in (0,T]$ set
    \[\psi(t) := b e^{Kt}, \qquad \varphi(x,t):=b t^{-3/2}e^{Lt} f(x,t),\]
where $f(x,t) \geq 1$ is such that \eqref{phi growth} holds and we have
    \[(\partial_t - \Delta_\gamma)f \geq 2Lf.\]
The construction of a function $f$ with these properties is standard---see eg. \cite[Lemma~12.7]{Chow_etc}. Inserting these definitions gives 
    \begin{align*}
        0 &\geq \bigg(\frac{1}{2t} + 2L - \Theta\bigg)\frac{be^{Lt} }{t^{3/2}} f -\Theta \varepsilon^{-2}\bigg(1+\frac{1}{t^2}\bigg) be^{Kt} + (K - \Theta \varepsilon^{-2})\psi|V|^2
    \end{align*}
at $(x_0, t_0)$. For $K > \Theta \varepsilon^{-2}$ and $L > \max\{\Theta/2, K\}$ we conclude that 
        \begin{align*}
        0 &\geq \bigg(\frac{1}{2t} + 2L - \Theta\bigg)\frac{1}{t^{3/2}} -\Theta \varepsilon^{-2}\bigg(1+\frac{1}{t^2}\bigg).
    \end{align*}
at $t = t_0$. But if $L$ is sufficiently large the quantity on the right is positive for every $t \in (0, T]$. This is a contradiction. 

We thus conclude that $\hat Q(V) > 0$ for every point $(x,t) \in M \times (0, T]$ and tangent vector $V \in T_x M$. Sending $b \to 0$, we obtain $Q(V) \geq 0$. With this the proof of Theorem~\ref{Harnack} is complete.

Theorem~\ref{Harnack k-convex} is proven as follows. Here we are assuming $\gamma$ is convex, or concave and strictly inverse-concave. Suppose $F$ is a convex ancient solution satisfying the hypotheses of the theorem. Fix a $T < \infty$. Since $F$ is assumed to be uniformly $k$-convex on $[-T,0]$, with $k \leq n - m_{\IC} + 1$, Lemma~\ref{lemma unif parabolic} implies there is a constant $C$ such that \eqref{unif ellipticity} holds for $t \in [-T, 0]$. Moreover, Lemma~\ref{lemma pert} implies there is a constant $\varepsilon$ such that \eqref{pert inverse-concave} holds for $t \in [-T, 0]$. Since we are assuming convexity and noncollapsing, Corollary~5.2 in \cite{Brendle_Huisken} (that result is for a specific flow, but the proof generalises---see \cite[Theorem~4.14]{Lynch_Thesis} for the general case) yields the pointwise gradient estimates
    \[G^{-2}|\nabla A| + G^{-3}|\nabla^2 A| \leq \Lambda\]
for some $\Lambda < \infty$. This $\Lambda$ depends only on $n$, $\gamma$, the ellipticity constant $C$ and the noncollapsing constant.  Therefore, $F$ satisfies all of the hypotheses of Theorem~\ref{Harnack} on $[-T, 0]$. Consequently, we have
    \[\partial_t G + 2\langle \nabla G, V\rangle + A(V,V) + \frac{G}{2(t+T)} \geq 0\]
at every point $(x,t) \in M\times(-T,0]$ and for every $V \in T_x M$. Since this is true for every $T < \infty$, we may send $T \to \infty$ to obtain
    \[\partial_t G + 2\langle \nabla G, V\rangle + A(V,V) \geq 0.\]
This completes the proof of Theorem~\ref{Harnack k-convex}.

\section{The equality case}
In this section we prove Theorem~\ref{translator}. Suppose $\gamma$ is strictly inverse-concave. Let $F : M\times(-\infty,0] \to \mathbb{R}^{n+1}$ be an ancient solution to \eqref{flow} which satisfies $A > 0$ and the Harnack inequality
    \[\partial_t G -A^{-1}(\nabla G, \nabla G) \geq 0\]
at every point in spacetime. In addition, we assume bounded curvature
    \[\sup_{M\times[-T,0]} G < \infty\]
and the uniform ellipticity condition~\eqref{unif ellipticity} in $[-T,0]$ for each $T < \infty$.

Suppose there is some $(x_0,t_0) \in M \times (-\infty,0]$ at which
    \[\partial_t G -A^{-1}(\nabla G, \nabla G) = 0.\]
The computations at the beginning of Section~\ref{sec evolution} show that 
    \begin{align*}
        (\partial_t &- \Delta_\gamma)(\partial_t G - A^{-1}(\nabla G, \nabla G))\\
        &\geq|A|^2_\gamma(\partial_t G - A^{-1}(\nabla G, \nabla G))\\
        &+(\ddot\gamma^{ij,kl} + 2\dot\gamma^{ik}A^{-1}_{jl})(\nabla_t A_{ij} - A^{-1}(\nabla G, \nabla A_{ij}))(\nabla_t A_{kl} - A^{-1}(\nabla G, \nabla A_{kl}))
    \end{align*}
on $M\times(-\infty,0]$. The right-hand side is nonnegative, so the strong maximum principle implies
    \[\partial_t G - A^{-1}(\nabla G, \nabla G) = 0\]
for $t \leq t_0$. Since $\gamma$ is assumed to be strictly inverse-concave, we conclude that 
    \[\nabla_t A_{ij} - A^{-1}(\nabla G, \nabla A_{ij}) = 0,\]
for $t \leq t_0$.

Define $\xi = -G\nu - A^{-1}(\nabla G)$. Using the Codazzi equations we compute
    \begin{align*}
        \langle D_i \xi, e_j\rangle &= A^{-1}_{jk} A^{-1}(\nabla G, \nabla A_{ik}) - GA_{ij} - A_{jk}^{-1}\nabla_i\nabla_k G \\
        &= A^{-1}_{jk} A^{-1}(\nabla G, \nabla A_{ik}) - A_{jk}^{-1}\nabla_t A_{ik}\\
        &=0
    \end{align*}
for $t \leq t_0$, where $D$ is the Euclidean connection on $\mathbb{R}^{n+1}$. It follows that $\xi$ can be extended to a constant vector field $\xi$ on $\mathbb{R}^{n+1}$ and, by definition,
    \[G = -\langle \xi, \nu \rangle.\]
That is, $F$ is a translating soliton with velocity $\xi$ for $t \leq t_0$. 

It is now straightforward to show that $F$ is a translating soliton with velocity $\xi$ for $t \leq 0$. Since $\xi$ is constant on $\mathbb{R}^{n+1}$, the quantity $u := G + \langle \xi, \nu \rangle$ satisfies
    \[\partial_t u = \dot\gamma^{ij}\nabla_i \nabla_j u + \dot \gamma^{ij}A_{ik}A_{kj}u.\]
Moreover, because we have bounded curvature and uniform ellipticity, the function $h(x,t) = e^{Kt}(|x|^2 + 1)$ satisfies
    \[\partial_t h > \dot\gamma^{ij}\nabla_i \nabla_j h + \dot \gamma^{ij}A_{ik}A_{kj} h\]
for $t \geq t_0$ if the constant $K$ is sufficiently large. By the maximum principle, $\sup_{M_t} \tfrac{|u|}{h}$ is nonincreasing for $t_0 \leq t \leq 0$. Since $u$ vanishes at $t = t_0$, we have $G = -\langle \xi, \nu\rangle$ for all $t \leq 0$.

\bibliography{references}

\newcommand{\etalchar}[1]{$^{#1}$}
\begin{thebibliography}{CCG{\etalchar{+}}08}

\bibitem[ACGL22]{Andrews_etc}
Ben Andrews, Bennett Chow, Christine Guenther, and Mat Langford.
\newblock {\em Extrinsic geometric flows}, volume 206.
\newblock American mathematical society, 2022.

\bibitem[ALM13]{ALM_Noncollapsing}
Ben Andrews, Mat Langford, and James McCoy.
\newblock Non-collapsing in fully non-linear curvature flows.
\newblock {\em Ann. Inst. H. Poincar\'{e} Anal. Non Lin\'{e}aire},
  30(1):23--32, 2013.

\bibitem[And94]{Andrews_Harnack}
Ben Andrews.
\newblock Harnack inequalities for evolving hypersurfaces.
\newblock {\em Mathematische Zeitschrift}, 217(1):179--197, 1994.

\bibitem[BC19]{Brendle_Choi_a}
Simon Brendle and Kyeongsu Choi.
\newblock Uniqueness of convex ancient solutions to mean curvature flow in
  {$\mathbb{R}^3$}.
\newblock {\em Inventiones mathematicae}, 217(1):35--76, 2019.

\bibitem[BC21]{Brendle_Choi_b}
Simon Brendle and Kyeongsu Choi.
\newblock Uniqueness of convex ancient solutions to mean curvature flow in
  higher dimensions.
\newblock {\em Geometry \& Topology}, 25(5):2195--2234, 2021.

\bibitem[BH17]{Brendle_Huisken}
Simon Brendle and Gerhard Huisken.
\newblock A fully nonlinear flow for two-convex hypersurfaces in {R}iemannian
  manifolds.
\newblock {\em Invent. Math.}, 210(2):559--613, 2017.

\bibitem[Bre09]{Brendle_Harnack}
Simon Brendle.
\newblock A generalization of hamilton's differential harnack inequality for
  the ricci flow.
\newblock {\em Journal of Differential Geometry}, 82(1):207--227, 2009.

\bibitem[CCG{\etalchar{+}}08]{Chow_etc}
Bennett Chow, Sun-Chin Chu, David Glickenstein, Christine Guenther, James
  Isenberg, Tom Ivey, Dan Knopf, Peng Lu, Feng Luo, and Lei Ni.
\newblock {\em The {R}icci flow: techniques and applications. {P}art {II}},
  volume 144 of {\em Mathematical Surveys and Monographs}.
\newblock American Mathematical Society, Providence, RI, 2008.
\newblock Analytic aspects.

\bibitem[Cho91]{Chow}
Bennett Chow.
\newblock On {H}arnack's inequality and entropy for the {G}aussian curvature
  flow.
\newblock {\em Comm. Pure Appl. Math.}, 44(4):469--483, 1991.

\bibitem[Ham93]{Hamilton_Harnack_Ricci}
Richard~S. Hamilton.
\newblock The {H}arnack estimate for the {R}icci flow.
\newblock {\em J. Differential Geom.}, 37(1):225--243, 1993.

\bibitem[Ham95]{Hamilton_Harnack_MCF}
Richard~S. Hamilton.
\newblock Harnack estimate for the mean curvature flow.
\newblock {\em Journal of Differential Geometry}, 41(1):215--226, 1995.

\bibitem[LY86]{Li--Yau}
Peter Li and Shing-Tung Yau.
\newblock On the parabolic kernel of the {S}chr\"{o}dinger operator.
\newblock {\em Acta Math.}, 156(3-4):153--201, 1986.

\bibitem[Lyn20]{Lynch_Thesis}
Stephen Lynch.
\newblock {\em Convexity and gradient estimates for fully nonlinear curvature
  flows}.
\newblock PhD thesis, Universit{\"a}t T{\"u}bingen, 2020.

\bibitem[Lyn22]{Lynch}
Stephen Lynch.
\newblock Convexity estimates for hypersurfaces moving by concave curvature
  functions.
\newblock {\em Duke Math. J.}, 171(10):2047--2088, 2022.

\end{thebibliography}
\bibliographystyle{alpha}

\end{document}